\def\l@subsection{\@tocline{2}{0pt}{2.5pc}{2.5pc}{}}
\def\chapter{\clearpage\thispagestyle{plain}\global\@topnum\z@ 
\@afterindenttrue \secdef\@chapter\@schapter}
\newtheorem{thmgl} {Theorem}    
\newtheorem{propgl}{Proposition}
\newtheorem{lemgl} {Lemma}
\theoremstyle{definition}
\newtheorem{remgl} {Remark}
\newtheorem{remsgl} [remgl]{Remarks}
\newcommand{\mf}{\mathfrak}
\newcommand{\mc}{\mathcal}
\newcommand{\mb}{\mathbb}
\newcommand{\nts}{\negthinspace}     
\newcommand{\Nts}{\nts\nts}
\newcommand{\un}{\underline}
\newcommand{\sm}{\setminus}         
\newcommand{\ot}{\otimes}           
\newcommand{\la}{\langle}
\newcommand{\ra}{\rangle}
\newcommand{\End}{{\rm End}}
\newcommand{\Sym}{{\rm Sym}} 
\newcommand{\tr}{{\rm tr}}
\newcommand{\id}{{\rm id}}
\newcommand{\g}{\mf{g}}
\let\ttie\t
\newcommand{\tie}[1]{{\let\t\ttie \ttie#1}}
\renewcommand{\t}{\mf{t}}  
\newcommand{\gl}{\mf{gl}}
\newcommand{\spl}{\mf{sl}}
\newcommand{\GL}{{\rm GL}}
\newcommand{\ve}{\varepsilon}
\def\vcdots{\vbox{\baselineskip4\p@ \lineskiplimit\z@
\kern3\p@\hbox{.}\hbox{.}\hbox{.}\Nts\nts\kern3\p@}}
\begin{document}

\title{Highest weight vectors for the adjoint action of $\GL_n$ on polynomials}

\begin{abstract}
Let $G=\GL_n$ be the general linear group over an algebraically closed field $k$ and let $\g=\gl_n$ be its Lie algebra. Let $U$ be the subgroup of $G$ which consists of the upper unitriangular matrices. Let $k[\g]$ be the algebra of polynomial functions on $\g$ and let $k[\g]^G$ be the algebra of invariants under the conjugation action of $G$. For certain special weights we give explicit bases for the $k[\g]^G$-module $k[\g]^U_\lambda$ of highest weight vectors of weight $\lambda$. For five of these special weights we show that this basis is algebraically independent over $k[\g]^G$ and generates the $k[\g]^G$-algebra $\bigoplus_{r\ge0}k[\g]^U_{r\lambda}$. Finally we formulate a question which asks whether in characteristic zero $k[\g]^G$-module generators of $k[\g]^U_\lambda$ can be obtained by applying one explicit highest weight vector of weight $\lambda$ in the tensor algebra $T(\g)$ to varying tuples of fundamental invariants.
\end{abstract}

\author[R.\ H.\ Tange]{Rudolf Tange}

\keywords{}
\thanks{2010 {\it Mathematics Subject Classification}. 13A50, 16W22, 20G05.}

\maketitle
\markright{\MakeUppercase{Highest weight vectors for the adjoint action of} $\GL_n$}

\section*{Introduction}
Let $\GL_n$ be the general linear group over an algebraically closed field $k$ and let $\gl_n$ be its Lie algebra. In this paper we will be interested in explicit formulas for highest weight vectors in the ring $k[\gl_n]$ of polynomial functions on $\gl_n$ under the conjugation action. It is natural to take into account the fact that the highest weight vectors of a given weight form a module over the invariant algebra $k[\gl_n]^{\GL_n}$. A crude method would be to map the highest weight vectors in the tensor algebra $T(\gl_n)$ (see e.g. \cite{BCHLLS}) into the symmetric algebra $S(\gl_n)$ which is $\GL_n$-equivariantly isomorphic to $k[\gl_n]$. Mostly one will be projecting to zero. For example, in \cite[Sect.~5 Cor.~2]{PrT} it was shown that the lowest degree in $k[\gl_n]$ where the irreducible of highest weight $n\varpi_1$ occurs is $\frac{1}{2}n(n-1)$. But the lowest degree in $T(\gl_n)$ where this irreducible occurs is $n-1$. Our method involves differentiation of the fundamental invariants and applies to any relevant weight, although we can only prove that it provides a $k[\gl_n]^{\GL_n}$-module basis for a special family of weights.

In \cite{Kos} Kostant showed that, for any reductive group $G$ over $\mb C$, the coordinate rings of the fibers of the adjoint quotient are all isomorphic as $G$-modules to the space $H$ of harmonic functions and determined the multiplicities of the irreducibles in $H$. In \cite{Hes} Hesselink obtained a completely general formula for the graded character of $H$ (or the coordinate ring of the nilpotent cone). For more results on multiplicities in the tensor, symmetric and exterior algebra of the Lie algebra we refer the reader to \cite{Han}, \cite{Stem}, \cite{Gup}, \cite{Bry}, \cite{Ree} and \cite{Baz} and the references in there.

The paper is organised as follows. In Section~\ref{s.prelim} we introduce some basic notation and we recall some results from the literature. Section~\ref{s.semiinvs} contains the main results. Theorem~\ref{thm.basicinvs} gives explicit $k[\gl_n]^{\GL_n}$-module bases for the space of highest weight vectors for a family of $2(n-1)-1$ weights. Theorem~\ref{thm.algindinvs} extends this to all the multiples of $5$ of these weights. Theorems~\ref{thm.basicinvs} and \ref{thm.algindinvs} generalise the results in \cite[Sect.~5]{PrT} for the weight $n\varpi_1$. See also \cite[Lemme~3.4]{Dix2} for the case of the universal enveloping algebra of $\spl_n$. In Section~\ref{s.GL_3} we briefly consider the example $\GL_3$. Here one can actually determine $k[\gl_n]^{\GL_n}$-module bases for the space of highest weight vectors for all relevant weights, that is, one can completely determine the algebra $k[\gl_n]^{U_n}$, where $U_n$ consists of the upper unitriangular matrices. In Section~\ref{s.question} we formulate a question which asks whether in characteristic zero $k[\gl_n]^{\GL_n}$-module generators of $k[\gl_n]^U_\lambda$ can be obtained by applying one explicit highest weight vector of weight $\lambda$ in the tensor algebra $T(\gl_n)$ to varying tuples of fundamental invariants.

\section{Preliminaries}\label{s.prelim}
Throughout this paper $k$ is an algebraically closed field and $G=\GL_n$, $n\ge2$, is the general linear group of invertible $n\times n$ matrices. Its natural module is $V=k^n$ and its Lie algebra is $\g=\gl_n\cong V\ot V^*$. The standard basis elements of $V$ are denoted by $e_1,\ldots,e_n$ and the dual basis elements are denoted by $e_1^*,\ldots,e_n^*$. We identify $\g=\gl_n$ with $\End(V)$, the endomorphisms of the vector space $V$. We denote by $E_{ij}$ the matrix which is $1$ on position $(i,j)$ and $0$ elsewhere. Under the isomorphism $\g\cong V\ot V^*$, $E_{ij}$ corresponds to $e_i\ot e_j^*$. The elements of the dual basis of $E_{ij}$ are denoted by $\xi_{ij}$. So the algebra $k[\g]$ of polynomial functions on $\g$ is a polynomial algebra in the $\xi_{ij}$. The group $G$ acts on $\g$ via the adjoint action (conjugation) and therefore also on $k[\g]$. For any group $H$ and any $kH$-module $W$ we denote the space of $H$-fixed vectors in $W$ by $W^H$.

The Borel subgroup of $G$ which consists of the invertible upper triangular matrices is denoted by $B$ and its unipotent radical, which consists of the upper unitriangular matrices, by $U$. We denote by $T$ the maximal torus of $G$ which consist of the invertible diagonal matrices. The character group of $T$ is denoted by $X$ and its standard basis elements are denoted by $\ve_1,\ldots,\ve_n$. Recall that the positive roots relative to $B$ are the roots $\ve_i-\ve_j$, $i<j$, and that $\lambda\in X$ is dominant if and only if $\lambda_1\ge\lambda_2\ge\cdots\ge\lambda_n$. Furthermore, $\lambda\in X$ occurs in the root lattice if and only if its coordinate sum is $0$. The all-zero and the all-one vector in $X$ are denoted by $\un 0$ and $\un 1$ respectively. For $i\in\{1,\ldots,n-1\}$ the $i$-th fundamental weight $\varpi_i\in\mb Q\ot_{\mb Z}X$ is defined by
$$\varpi_i=\sum_{j=1}^i\ve_j-\frac{i}{n}\,\un 1=\frac{1}{n}\Big((n-i)\sum_{j=1}^i\ve_j-i\sum_{j=i+1}^n\ve_j\Big)\,.$$ The $\mb Z$-span of the fundamental weights contains the root lattice. For $\lambda\in X$ and $W$ a $T$-module we denote the weight space $\{x\in W\,|\,t\cdot x=\lambda(t)x\text{\ for all\ }t\in T\}$ by $W_\lambda$. We denote the irreducible $\GL_n(\mb C)$-module of highest weight $\lambda$ by $L_{\mb C}(\lambda)$. The Weyl group of $G$ relative to $T$ is the symmetric group $\Sym_n$ which permutes the coordinates. We denote the longest Weyl group element by $w_0$. We have $w_0(\ve_i)=\ve_{n-i+1}$, put differently, $w_0(\lambda)$ is the reversed tuple of $\lambda$.

For $i\in\{1,\ldots,n\}$ we define $s_i\in k[\g]$ by $s_i(x)=\tr\wedge^i(x)$, where $\wedge^i(x)$ denotes the $i$-th exterior power of $x$. Then the $s_i$ are up to sign the coefficients of the characteristic polynomial. Note that $s_1=\tr$ and $s_n=\det$. Furthermore, the $s_i$ are algebraically independent generators of $k[\g]^G$. See e.g. \cite[Sect.~7]{Jan2}.

The reader who only wants to understand the precise statements of the main results can now continue to Section~\ref{s.semiinvs}, read definitions \eqref{eq.weights} and \eqref{eq.semiinvs} and then Theorems~\ref{thm.basicinvs} and \ref{thm.algindinvs}.

We now state some auxiliary results that will be needed for the proofs of the main results. The result below was mentioned to me by S.~Donkin.

\begin{lemgl}\label{lem.dimension}
$\dim k[\g]^U=\dim B=\frac{1}{2}n(n+1)$.
\end{lemgl}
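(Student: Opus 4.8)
The plan is to compute the Krull dimension of the invariant algebra through the transcendence degree of its field of fractions, and to obtain the latter from the dimension of a generic $U$-orbit on $\g$ under the conjugation action. Since $U$ is a maximal unipotent subgroup of the reductive group $G$, Grosshans' theorem guarantees that $k[\g]^U$ is a finitely generated $k$-algebra; being a subalgebra of the domain $k[\g]$ it is itself a domain, so $\dim k[\g]^U$ equals the transcendence degree over $k$ of its fraction field.

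The next step is to identify this fraction field with the field $k(\g)^U$ of $U$-invariant rational functions. Here I would use that $U$ is unipotent and hence has no nontrivial characters: given $f\in k(\g)^U$, the set $\mf d=\{h\in k[\g]\,:\,hf\in k[\g]\}$ is a nonzero $U$-stable ideal of $k[\g]$, and as a nonzero rational $U$-module it contains a nonzero $U$-fixed vector $h$. Then both $h$ and $hf$ lie in $k[\g]^U$ and $f=(hf)/h$, so that $\operatorname{Frac}(k[\g]^U)=k(\g)^U$. By Rosenlicht's theorem the transcendence degree of $k(\g)^U$ over $k$ equals $\dim\g$ minus the dimension $d$ of a generic $U$-orbit.

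It remains to show $d=\dim U=\tfrac12 n(n-1)$, which amounts to showing that the generic stabilizer is finite, that is, $\u\cap\c_\g(x)=0$ for generic $x$, where $\c_\g(x)=\{X\in\g\,:\,[X,x]=0\}$. The dimension of $\u\cap\c_\g(x)=\Ker(\ad x|_\u)$ is upper semicontinuous in $x$, so it suffices to exhibit a single $x$ for which this intersection vanishes. For this I would take the regular nilpotent $x=\sum_{i=1}^{n-1}E_{i+1,i}$: its centralizer is $\c_\g(x)=k[x]=\operatorname{span}(I,x,\ldots,x^{n-1})$, all of whose elements are lower triangular, whence $\u\cap\c_\g(x)=0$; consequently the stabilizer $U_x$ is finite and the $U$-orbit of $x$ has dimension $\dim U$. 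By semicontinuity the generic orbit has dimension $\dim U$ as well, and since no orbit can exceed this bound, $d=\dim U$. Combining the steps gives $\dim k[\g]^U=\dim\g-\dim U=n^2-\tfrac12 n(n-1)=\tfrac12 n(n+1)=\dim B$.

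The main subtlety, and the step I would be most careful about, is the legitimacy of reading off the generic orbit dimension at a very non-generic (nilpotent) point; this is exactly where upper semicontinuity of the stabilizer dimension does the work, converting a statement about generic $x$ into the verification of a single convenient example. Note also that the inequality $\dim U_x\le\dim(\u\cap\c_\g(x))$, used in the safe direction, makes the orbit-dimension computation valid in arbitrary characteristic. The only external input beyond this is the finite generation of $k[\g]^U$ (Grosshans), which is what makes the Krull dimension coincide with the transcendence degree of the fraction field.
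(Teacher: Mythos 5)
Your proof is correct, but it takes a genuinely different route from the paper's. You compute $\dim k[\g]^U$ as the transcendence degree of its fraction field (using Grosshans' theorem for finite generation), identify that fraction field with $k(\g)^U$ by the denominator-ideal/Kolchin fixed-point argument, and then apply Rosenlicht's theorem, reducing everything to the claim that the generic $U$-orbit on $\g$ has dimension $\dim U$; this you verify at the regular nilpotent $x=\sum_{i=1}^{n-1}E_{i+1,i}$, where $\c_\g(x)=k[x]$ consists of lower triangular matrices so that $\u\cap\c_\g(x)=0$, and you propagate to generic $x$ by semicontinuity, correctly using the characteristic-free inequality $\dim U_x\le\dim\bigl(\u\cap\c_\g(x)\bigr)$. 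The paper argues instead by explicit localization: the minors $\Delta_m=\det\bigl((\xi_{ij})_{n-m+1\le i\le n,\,1\le j\le m}\bigr)$ are $U$-invariant for the conjugation action, inverting them passes to the big cell $Bw_0B$, and the unique factorization $uw_0b$ gives an explicit isomorphism $k[Bw_0B]^U\cong k[B]$, $f\mapsto(uw_0b\mapsto f(bu))$, whence the dimension. The trade-off: the paper's computation is short, nearly self-contained, and yields strictly more, namely an explicit birational model showing that the invariant field is $k(B)$, with concrete invariants $\Delta_m$ (which indeed resurface in the $\GL_3$ section as $\xi_{31}$ and $|\mc X^{(1,3)}|$); your argument invokes three external theorems (Grosshans, Kolchin, Rosenlicht) but is softer and more portable, applying to any unipotent group acting on an irreducible affine variety once a single point with finite stabilizer is exhibited. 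Note also that both proofs ultimately need finite generation (or a transcendence-degree formulation of dimension): the paper's step $\dim k[\g]^U=\dim k[Bw_0B]^U$ is exactly the localization-invariance of dimension that you justify explicitly via Grosshans, so making that point explicit is a genuine merit of your write-up.
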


\begin{proof}
For $m\in\{1,\ldots,n\}$ put $\Delta_m=\det\big((\xi_{ij})_{n-m+1\le i\le n,1\le j\le m}\big)$. Then $\Delta_m\in k[\g]^U$ for all $m\in\{1,\ldots,n\}$ and $k[\g][\Delta_1^{-1},\ldots,\Delta_n^{-1}]=k[Bw_0B]$. It follows that $k[\g]^U[\Delta_1^{-1},\ldots,\Delta_n^{-1}]=k[Bw_0B]^U$ and $\dim k[\g]^U=\dim k[Bw_0B]^U$. Now $k[Bw_0B]^U\cong k[B]$ via the isomorphism that sends $f\in k[B]$ to the function $uw_0b\mapsto f(bu)$.
\end{proof}

We recall the Graded Nakayama Lemma. For its proof we refer to \cite[Ch.~13]{Pas}, Lem.~4, Ex.~3, Lem.~3.
\begin{lemgl}[{\cite[Ch.~13]{Pas}}]\label{lem.gradednakayama}
Let $S=\bigoplus_{i\ge0}S^i$ be a positively graded ring with $S^0$ a field, let $M$ be a graded $S$-module and let  $(x_i)_{i\in I}$ be a family of homogeneous elements of $M$. Put $S^+=\bigoplus_{i>0}S^i$.
\begin{enumerate}[{\rm(i)}]
\item If the images of the $x_i$ in $M/S^+M$ span the vector space $M/S^+M$ over $S^0$, then the $x_i$ generate $M$.
\item If $M$ is projective and the images of the $x_i$ in $M/S^+M$ form an $S^0$-basis of $M/S^+M$, then $(x_i)_{i\in I}$ is an $S$-basis of $M$.
\end{enumerate}
\end{lemgl}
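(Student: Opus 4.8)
The plan is to exploit that all the data are graded, so everything decomposes degreewise and one can argue either by induction on the degree or by inspecting the lowest nonzero degree. Throughout I use that the grading on $M$ is bounded below, as it is in all the applications here (where $M$ is a subquotient of a polynomial ring), and that the $S$-submodule generated by homogeneous elements is again a graded submodule. This boundedness is essential: over a genuinely $\mb Z$-graded module the conclusion can fail.

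For (i), let $N\subseteq M$ be the graded $S$-submodule generated by $(x_i)_{i\in I}$. The hypothesis that the images of the $x_i$ span $M/S^+M$ over $S^0$ says precisely that $M=N+S^+M$. I would show $M^d=N^d$ for every degree $d$ by induction on $d$. In the lowest degree the term $(S^+M)^d$ vanishes, since $S^+$ strictly raises degree, so there $M^d=N^d$. For the inductive step write $(S^+M)^d=\sum_{e\ge1}S^eM^{d-e}$; each $M^{d-e}$ with $e\ge1$ equals $N^{d-e}$ by induction, so $(S^+M)^d=\sum_{e\ge1}S^eN^{d-e}\subseteq N$, and being homogeneous of degree $d$ it lies in $N^d$. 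Combined with $M^d=N^d+(S^+M)^d$ this gives $M^d=N^d$, whence $N=M$ and the $x_i$ generate $M$.

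For (ii), part (i) already shows the $x_i$ generate, so taking a graded free module $F=\bigoplus_{i\in I}S\,b_i$ with $\deg b_i=\deg x_i$ there is a degree-preserving surjection $\phi:F\to M$ with $\phi(b_i)=x_i$; it remains to prove $\phi$ injective. As $M$ is projective, the short exact sequence $0\to K\to F\xrightarrow{\phi}M\to0$ with $K=\Ker\phi$ splits, and applying the additive functor $-\ot_S S^0$ (reduction modulo $S^+$) to a split sequence keeps it exact: $0\to K/S^+K\to F/S^+F\xrightarrow{\ov{\phi}}M/S^+M\to0$. Now $F/S^+F$ is $S^0$-free on the images of the $b_i$, which $\ov{\phi}$ sends to the images of the $x_i$; by hypothesis the latter form an $S^0$-basis of $M/S^+M$, so $\ov{\phi}$ is an isomorphism and hence $K/S^+K=0$, i.e. $K=S^+K$. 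Since $K$ is a graded submodule of the bounded-below module $F$, looking at its lowest nonzero degree (where $(S^+K)^d=0$) forces $K=0$. Thus $\phi$ is an isomorphism and $(x_i)_{i\in I}$ is an $S$-basis of $M$.

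I expect the only genuine subtlety to be the grading bookkeeping: one must check that $N$ and $K$ really are graded submodules and that the induction, respectively the lowest-degree argument, is legitimate, which is exactly where boundedness below of the grading is used. Projectivity enters in (ii) solely to split the sequence so that reduction modulo $S^+$ remains exact; without it one would instead have to verify $S$-linear independence of the $x_i$ by hand, which is more delicate.
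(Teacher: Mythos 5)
Your proof is correct, and since the paper gives no argument of its own for this lemma (it simply defers to \cite[Ch.~13]{Pas}, Lem.~4, Ex.~3, Lem.~3), the comparison is with the standard proof that the citation encodes --- which is exactly what you reconstruct: degreewise induction from the lowest degree for (i), and splitting the presentation $0\to K\to F\to M\to 0$ by projectivity and reducing modulo $S^+$ for (ii). One point in your write-up deserves emphasis rather than apology: the bounded-below hypothesis you add is genuinely necessary, not mere bookkeeping. For arbitrary $\mb Z$-graded modules the statement fails --- take $S=k[x]$ and $M=k[x,x^{-1}]$, where $S^+M=M$, so $M/S^+M=0$ is spanned by the empty family, yet $M\ne0$ --- so the lemma as quoted implicitly assumes the grading of $M$ is bounded below (or that $M$ is $\mb N$-graded, as in the cited source). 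In every application in this paper $M=k[\g]^U_\lambda$ is a graded submodule of the polynomial ring $k[\g]$, hence non-negatively graded, so your argument covers all the uses made of the lemma.
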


The closed subvariety of $\g$ which consists of the nilpotent matrices is denoted by $\mc N$. Since $\mc N$ is $G$-stable, $G$ acts on the algebra $k[\mc N]$ of regular functions on $\mc N$.
The two results below are actually valid, under some mild assumptions, for arbitrary reductive groups, but we will not need this generality.

\begin{propgl}[{\cite[Thm.~11]{Kos}, \cite[Sect.~7]{Jan2}, \cite[Thm.~2.2]{Don}, \cite[Prop.~1.3b(i)]{Don2}}]\label{prop.free}\
\begin{enumerate}[{\rm(i)}]
\item The vanishing ideal of $\mc N$ in $k[\g]$ is generated by $s_1,\ldots,s_n$ and for each $\lambda$ the restriction $k[\g]^U_\lambda\to k[\mc N]^U_\lambda$ is surjective and has kernel $(k[\g]^G)^+k[\g]^U_\lambda$.
\item We have $k[\g]^U_\lambda\ne0$ if and only if $\lambda$ is dominant and lies in the root lattice.
\item Assume $\lambda$ is dominant and lies in the root lattice. Then $\dim k[\mc N]^U_\lambda=\dim L_{\mb C}(\lambda)_{\un{0}}$ and $k[\g]^U_\lambda$ is a free $k[\g]^G$-module of rank $\dim L_{\mb C}(\lambda)_{\un{0}}$.
\end{enumerate}
\end{propgl}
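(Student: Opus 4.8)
The plan is to deduce all three parts from the structural theorem of Kostant, in the characteristic-free form supplied by the cited work of Donkin: namely that the adjoint quotient map $\pi\colon\g\to\Spec k[\g]^G$ is flat with reduced complete-intersection zero fibre $\mc N$, and that $k[\g]$ is a free $k[\g]^G$-module admitting a $G$-stable graded complement to $(k[\g]^G)^+k[\g]$, so that as graded $G$-modules $k[\g]\cong k[\g]^G\ot_k k[\mc N]$ with $G$ acting trivially on the factor $k[\g]^G=k[s_1,\dots,s_n]$. Everything else is then extracted from this decomposition together with the theory of good filtrations, which I would use to transport the characteristic-$0$ dimension $\dim L_{\mb C}(\lambda)_{\un 0}$ to arbitrary characteristic.

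For part (i), the identity $I(\mc N)=(s_1,\dots,s_n)$ is exactly the assertion that $\mc N$ is the reduced scheme-theoretic zero fibre, i.e. $k[\mc N]=k[\g]/J$ with $J=(s_1,\dots,s_n)k[\g]=(k[\g]^G)^+k[\g]$. I would then consider the short exact sequence $0\to J\to k[\g]\to k[\mc N]\to0$ of graded $G$-modules. Under the decomposition $k[\g]\cong k[\g]^G\ot_k k[\mc N]$ the surjection $k[\g]\to k[\mc N]$ admits a $G$-equivariant section (the inclusion of the degree-zero part of the $k[\g]^G$-factor), so it remains surjective after applying the additive functor $(-)^U_\lambda$, despite $U$ being non-reductive; this gives surjectivity of $k[\g]^U_\lambda\to k[\mc N]^U_\lambda$. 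Its kernel is $J^U_\lambda$, and the same decomposition gives $J=(k[\g]^G)^+\ot_k k[\mc N]$, whence $J^U_\lambda=(k[\g]^G)^+\ot_k k[\mc N]^U_\lambda=(k[\g]^G)^+k[\g]^U_\lambda$, as required.

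For parts (ii) and (iii), the decomposition yields $k[\g]^U_\lambda\cong k[\g]^G\ot_k k[\mc N]^U_\lambda$, a free $k[\g]^G$-module of rank $\dim k[\mc N]^U_\lambda$; equivalently, since part (i) identifies the fibre $k[\g]^U_\lambda/(k[\g]^G)^+k[\g]^U_\lambda$ with $k[\mc N]^U_\lambda$, part (ii) of Lemma~\ref{lem.gradednakayama} promotes any homogeneous lift of a basis of $k[\mc N]^U_\lambda$ to a $k[\g]^G$-basis. It then remains to compute $\dim k[\mc N]^U_\lambda$ and to settle nonvanishing. Here $k[\g]=S(\g^*)$ and $\g^*\cong\g$ (via the trace form) has a good filtration, hence so do all its symmetric powers and the quotient $k[\mc N]$; for a module with good filtration $\dim(\,\cdot\,)^U_\lambda$ counts the multiplicity of the induced module $\nabla(\lambda)$ and is therefore independent of the characteristic. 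Evaluating in characteristic $0$, Kostant's harmonic-multiplicity theorem gives this as $\dim L_{\mb C}(\lambda)_{\un 0}$, so $\dim k[\mc N]^U_\lambda=\dim L_{\mb C}(\lambda)_{\un 0}$ in all characteristics. The only-if part of (ii) is elementary: all weights of $k[\g]$ lie in the root lattice (those of $\g\cong\g^*$ do), and a nonzero $U$-invariant of weight $\lambda$ is a maximal vector, forcing $\lambda$ dominant; conversely, for dominant $\lambda$ in the root lattice $\un 0$ lies in the convex hull of the $W$-orbit of $\lambda$ and is congruent to $\lambda$ modulo the root lattice, so $\dim L_{\mb C}(\lambda)_{\un 0}\ge1$ and $k[\g]^U_\lambda\ne0$.

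The technical heart, and the step I expect to be the main obstacle, is the characteristic-free input: that a characteristic-$0$ number $\dim L_{\mb C}(\lambda)_{\un 0}$ governs the characteristic-$p$ space $k[\mc N]^U_\lambda$. In characteristic $0$ the whole statement is Kostant's, with $(-)^U$ exact and the graded complement obtained from complete reducibility. In characteristic $p$ one must instead establish (a) flatness of $\pi$ together with the reducedness of $\mc N$, and (b) the existence of a good filtration on $k[\g]$ compatible with its $k[\g]^G$-module structure, yielding both the freeness over $k[\g]^G$ and the characteristic-independence of the $\nabla(\lambda)$-multiplicities. These are precisely the inputs furnished by the cited results of Donkin, which I would invoke rather than reprove; granting them, the assembly above is routine.
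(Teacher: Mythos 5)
Your proposal is correct, and it takes the same route as the paper: the paper offers no proof of this proposition at all, presenting it as a known result resting on exactly the sources you invoke (Kostant for the characteristic-zero multiplicities, Donkin for flatness/reducedness of the nilpotent fibre, the $G$-module decomposition $k[\g]\cong k[\g]^G\ot_k k[\mc N]$, and the good filtration on $k[\mc N]$). Your assembly of those inputs --- the split surjection argument for (i), the weight-lattice/dominance argument for (ii), and the character-determined $\nabla(\lambda)$-multiplicity argument for (iii) --- is sound and is just the routine verification the paper leaves implicit in its citations.
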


Note that $\dim L_{\mb C}(\lambda)_{\un{0}}=\dim L_{\mb C}(-w_0(\lambda))_{\un{0}}$, since the nondegenerate pairing between $L_{\mb C}(\lambda)$ and $L_{\mb C}(-w_0(\lambda))=L_{\mb C}(\lambda)^*$ restricts to one between $L_{\mb C}(\lambda)_{\un{0}}$ and $L_{\mb C}(-w_0(\lambda))_{\un{0}}$.

We will call a weight $\lambda\in X$ {\it primitive} if it is nonzero, dominant, occurs in the root lattice and cannot be written as the sum of two such weights. Note that $k[\g]$ is a unique factorisation domain, since it is isomorphic to a polynomial ring.

\begin{lemgl}\label{lem.irreducible}
Let $u\in k[\g]$ be nonzero. Assume that its top degree term does not vanish on $\mc N$ and is a $B$-semi-invariant of primitive weight $\lambda$. Then $u$ is irreducible.
\end{lemgl}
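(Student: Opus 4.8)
The plan is to argue by contradiction. Suppose $u$ is not irreducible; since $k[\g]$ is a domain whose only units are the nonzero scalars, we may then write $u=fg$ with $f$ and $g$ both of positive degree. Decomposing into homogeneous parts, let $f_a$ and $g_b$ be the top-degree terms of $f$ and $g$, of degrees $a,b\ge1$, so that the top-degree term of $u$ is $u_d=f_ag_b$ with $d=a+b$. By hypothesis $u_d$ is a $B$-semi-invariant of primitive weight $\lambda$ and does not vanish on $\mc N$. I would first show that both $f_a$ and $g_b$ are themselves $B$-semi-invariants, then use primitivity to force one of them to have weight $\un 0$, and finally use Proposition~\ref{prop.free} to derive a contradiction from the existence of a nonconstant weight-$\un0$ semi-invariant that is nonvanishing on $\mc N$.

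The key step is that a factor of a $B$-semi-invariant is again a $B$-semi-invariant. Since $k[\g]$ is a UFD, write $u_d=c\prod_{i=1}^r p_i^{m_i}$ with $c\in k^\times$ and the $p_i$ pairwise non-associate irreducibles. For $b\in B$ the relation $b\cdot u_d=\lambda(b)u_d$ together with unique factorisation shows that $b$ permutes the associate classes of the $p_i$ while preserving the multiplicities $m_i$; this permutation depends algebraically on $b$ and equals the identity at $b=1$, so by connectedness of $B$ it is the identity for every $b$. Hence each $p_i$ is a $B$-semi-invariant. Applying unique factorisation to $u_d=f_ag_b$ expresses each of $f_a$ and $g_b$ as a scalar times a product of the $p_i$, so both are $B$-semi-invariants; let $\mu$ and $\nu$ be their weights, so that $\mu+\nu=\lambda$. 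This descent is the point I expect to require the most care, as it rests essentially on the UFD property, on the units of $k[\g]$ being only scalars, and on the connectedness of $B$ to rule out a nontrivial permutation of the irreducible factors.

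To finish, note that $f_a\in k[\g]^U_\mu$ and $g_b\in k[\g]^U_\nu$ are nonzero, so by Proposition~\ref{prop.free}(ii) both $\mu$ and $\nu$ are dominant and lie in the root lattice. Since $\lambda$ is primitive, the equality $\lambda=\mu+\nu$ forces one summand, say $\nu$, to be $\un0$. Thus $g_b$ is a homogeneous weight-$\un0$ element of $k[\g]^U$ of degree $b\ge1$. By Proposition~\ref{prop.free}(i) its restriction to $\mc N$ lies in $k[\mc N]^U_{\un0}$, which by Proposition~\ref{prop.free}(iii) has dimension $\dim L_{\mb C}(\un0)_{\un0}=1$ and hence consists of the constants. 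Therefore $g_b|_{\mc N}$ is a constant, and being homogeneous of positive degree $b$ it must vanish identically on $\mc N$. But $u_d=f_ag_b$ does not vanish on $\mc N$, so neither does $g_b$, a contradiction. Hence no such factorisation exists and $u$ is irreducible.
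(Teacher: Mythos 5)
Your proof is correct, and its engine is the same as the paper's: unique factorisation in $k[\g]$ together with connectedness of $B$ to show that the irreducible factors of the relevant semi-invariant are themselves $B$-semi-invariants, then primitivity of $\lambda$ to force all but one factor to have weight $\un 0$, then nonvanishing on $\mc N$ to get a contradiction. The differences are in the set-up and in the endgame. The paper first reduces to $u$ homogeneous (if the top term is irreducible, so is $u$) and factors $u$ completely into irreducibles, whereas you keep a hypothetical two-factor splitting $u=fg$ and pass to top terms; these are equivalent. More substantively, for the weights of the factors the paper cites Jantzen's result for dominance only, while you invoke Proposition~\ref{prop.free}(ii), which yields dominance \emph{and} membership in the root lattice --- the latter is actually needed to apply primitivity as it is defined, so your version is slightly more careful on this point. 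Finally, to dispose of the weight-$\un 0$ factor the paper observes that it is a $B$-invariant, hence a $G$-invariant (a standard fact, via completeness of $G/B$), and hence vanishes on $\mc N$ by Proposition~\ref{prop.free}(i); you instead apply Proposition~\ref{prop.free}(iii) at $\lambda=\un 0$ to get $\dim k[\mc N]^U_{\un 0}=1$, so the restriction to $\mc N$ is constant, and then homogeneity of positive degree plus the fact that $\mc N$ is a cone forces that constant to be zero. Both closings work: yours trades the ``$B$-invariant implies $G$-invariant'' fact for a multiplicity computation, while the paper's route through the vanishing ideal of $\mc N$ is a bit more direct.
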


\begin{proof}
If the top degree term of $u$ is irreducible, then so is $u$. So we may assume that $u$ is homogeneous. We now finish with the arguments from part 3 of the proof of \cite[Prop.~3]{PrT}. Let $u=u_1^{m_1}\cdots u_r^{m_r}$ be the factorisation of $u$ into irreducibles. Then the $u_i$ are homogeneous. By a standard argument using the uniqueness of the prime factorisation and the connectedness of $B$, we get that the $u_i$ are $B$-semi-invariants. Let $\lambda_1,\ldots,\lambda_r$ be their weights. Then these are dominant by \cite[Prop.~II.2.6]{Jan} and we have $\lambda=\sum_{i=1}^rm_i\lambda_i$. So, by the primitivity of $\lambda$, we get that for precisely one $i$, $\lambda_i\ne0$ and for this $i$ we have $m_i=1$. We may assume $i=1$. Then $\lambda_1=\lambda$ and $\lambda_2=\cdots=\lambda_r=0$. So $u_2,\ldots,u_r$ are $B$-invariants and therefore $G$-invariants. Since $u$ is nonzero on $\mc N$, we have by Proposition~\ref{prop.free}(i) that $r=1$.
\end{proof}

\section{The basic semi-invariants}\label{s.semiinvs}
For $t\in\{1,\ldots,n-1\}$ we define the weights
\begin{equation}\label{eq.weights}
\begin{split}
\lambda^t&=\sum_{i=n-t+1}^n(\ve_1-\ve_i)=(t,0,\ldots,0,-1,\ldots,-1)\text{\quad and}\\
\mu^t&=\ \ \sum_{i=1}^t\,(\ve_i-\ve_n)=(1,\ldots,1,0,\ldots,0,-t)\,.
\end{split}
\end{equation}

Note that $\lambda^t$ and $\mu^t$ are dominant and in the root lattice. We have $\lambda^1=\mu^1=\ve_1-\ve_n$ and $\mu^t=-w_0(\lambda^t)$. Furthermore, we have $\lambda^t=t\varpi_1+\varpi_{n-t}$ and $\mu^t=\varpi_t+t\varpi_{n-1}$. A weight $\sum_{i=1}^{n-1}m_i\varpi_i$ occurs in the root lattice if and only if $n|\sum_{i=1}^{n-1}im_i$. From this we easily deduce that $\lambda^t$ and $\mu^t$ are primitive.

All (Young) tableaux that we consider will have entries in $\{1,\ldots,n\}$. Recall that a tableaux is called {\it standard} if the entries in the rows are increasing (i.e. non-decreasing) from left to right and if the entries in the columns are strictly increasing from top to bottom.
\begin{lemgl}\label{lem.mult}
Let $t\in\{1,\ldots,n-1\}$.
\begin{enumerate}[{\rm(i)}]
\item We have $\dim k[\mc N]^U_{\lambda^t}=\dim k[\mc N]^U_{\mu^t}=\binom{n-1}{t}$.
\item Assume $t=1$ or $n\ge3$ and $t\in\{1,n-2,n-1\}$, let $r\ge0$ be an integer and put $s=\binom{n-1}{t}$. Then $\dim k[\mc N]^U_{r\lambda^t}=\dim k[\mc N]^U_{r\mu^t}=\binom{r+s-1}{r}$.
\end{enumerate}
\end{lemgl}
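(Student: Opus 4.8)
The plan is to reduce the two dimension statements to zero weight space multiplicities in irreducible $\GL_n(\mb C)$-modules and then to a count of semistandard tableaux. First I would invoke Proposition~\ref{prop.free}(iii): since $\lambda^t$, $\mu^t$ and all their multiples are dominant and lie in the root lattice, $\dim k[\mc N]^U_{r\lambda^t}=\dim L_{\mb C}(r\lambda^t)_{\un 0}$ and similarly for $\mu^t$. Because $\mu^t=-w_0(\lambda^t)$ gives $r\mu^t=-w_0(r\lambda^t)$, the remark following Proposition~\ref{prop.free} yields $\dim L_{\mb C}(r\mu^t)_{\un 0}=\dim L_{\mb C}(r\lambda^t)_{\un 0}$, so every $\mu^t$-assertion follows from the corresponding $\lambda^t$-assertion and I may work with $\lambda^t$ only. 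Next I would twist by a power of the determinant: tensoring with $L_{\mb C}(\un 1)=\det$ shifts every weight by $\un 1$ and leaves all weight multiplicities unchanged, so $\dim L_{\mb C}(r\lambda^t)_{\un 0}=\dim L_{\mb C}(\nu)_{(r,\ldots,r)}$, where $\nu=r\lambda^t+r\un 1=(r(t+1),r^{\,n-1-t})$ is a genuine partition of $rn$. Hence the desired dimension is the number of semistandard Young tableaux of shape $\nu$ and content $(r,\ldots,r)$.

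For part (i) we have $r=1$, so the content is $(1,\ldots,1)$ and each of $1,\ldots,n$ occurs exactly once; a semistandard tableau with all entries distinct is standard. The shape $\nu=(t+1,1^{\,n-1-t})$ is a hook with $n$ boxes, arm of length $t$ and leg of length $n-1-t$. Placing $1$ in the corner and choosing which $t$ of the remaining values $2,\ldots,n$ fill the arm determines the tableau, so the number of standard tableaux is $\binom{n-1}{t}$, as claimed.

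For part (ii) I would treat the three shapes separately, recording that $s=\binom{n-1}{t}$ equals $1,n-1,n-1$ for $t=n-1,n-2,1$. When $t=n-1$ the shape $(rn)$ is a single row, which has a unique semistandard filling of content $(r^n)$, giving $1=\binom{r}{r}$. When $t=n-2$ the shape $(r(n-1),r)$ has two rows, and the Jacobi--Trudi identity gives $s_\nu=h_{r(n-1)}h_r-h_{r(n-1)+1}h_{r-1}$; since the coefficient of $x_1^r\cdots x_n^r$ in $h_ah_b$ is the number of $\alpha\in\{0,\ldots,r\}^n$ of entry sum $a$, the substitution $\alpha\mapsto(r,\ldots,r)-\alpha$ together with Pascal's rule collapses the difference to $\binom{r+n-2}{r}$. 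When $t=1$ the shape is $(2r,r^{\,n-2})$, whose first $r$ columns have full height $n-1$ while the remaining $r$ cells sit in the first row; each tall column omits exactly one value of $\{1,\ldots,n\}$, and I would show that the tableau is determined by the multiset of omitted values, that the row condition forces the value $1$ never to be omitted, and conversely that every size-$r$ multiset drawn from $\{2,\ldots,n\}$ is realized. This sets up a bijection with such multisets, giving $\binom{(n-1)+r-1}{r}=\binom{r+n-2}{r}$, which equals $\binom{r+s-1}{r}$.

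The main obstacle is this last case. Concretely, when no tall column omits $1$ every such column contains $1$ in its top box, and I must verify that any two of these columns are comparable entrywise (the column omitting the larger value dominating the one omitting the smaller), so that the multiset of columns has a unique semistandard arrangement and each multiset from $\{2,\ldots,n\}$ indeed occurs; dually, a column omitting $1$ has top entry $2$ and produces a descent in the first row once the short cells are appended, ruling it out. Checking this comparability and the resulting bijection carefully is the crux, whereas the one-row and two-row cases are routine by comparison.
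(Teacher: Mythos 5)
Your proposal is correct, and its overall architecture is exactly that of the paper's proof: reduce everything to $\lambda^t$ via $\mu^t=-w_0(\lambda^t)$ and the remark after Proposition~\ref{prop.free}, use Proposition~\ref{prop.free}(iii) to convert the dimensions into zero-weight multiplicities, twist by a power of $\det$ to make the weight a partition $\nu$, and count semistandard tableaux of shape $\nu$ and content $(r,\ldots,r)$; part (i), the single-row case $t=n-1$, and the case $t=1$ then run just as in the paper. The one genuine divergence is the two-row case $t=n-2$: the paper observes directly that such a tableau is determined by its second row, which can be any weakly increasing sequence of length $r$ in $\{2,\ldots,n\}$, giving $\binom{n+r-2}{r}$ at once, whereas you compute the Kostka number by Jacobi--Trudi, $s_\nu=h_{r(n-1)}h_r-h_{r(n-1)+1}h_{r-1}$, followed by the complementation $\alpha\mapsto(r,\ldots,r)-\alpha$ and Pascal's rule. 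Both are valid; the paper's route is a one-line bijection, while yours trades that for a routine algebraic identity and avoids having to argue that the second row can be prescribed freely. One small correction to your $t=1$ case: the domination goes the other way --- the column omitting the \emph{smaller} value is entrywise larger (if $a<b$, the column missing $a$ has entry $i+1$ in the rows $a\le i<b$ where the column missing $b$ still has entry $i$), so in the unique semistandard arrangement the omitted values weakly \emph{decrease} from left to right, their increasing rearrangement being what fills the short cells. This does not affect your bijection with multisets of size $r$ from $\{2,\ldots,n\}$ or the resulting count $\binom{n+r-2}{r}$; curiously, the paper's own proof makes the same inversion when it asserts that the omitted values, read in column order, form an increasing sequence.
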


\begin{proof}
(i).\ We only have to consider the case of $\lambda^t$. The given dimension is by Proposition~\ref{prop.free} equal to $\dim L_{\mb C}(\lambda^t)_{\un0}$. Put $\nu:=\lambda^t+\un{1}=(t+1,1,\ldots,1,0,\ldots,0)$, where the number of zeros is $t$. Then $L_{\mb C}(\nu)=\det\ot L_{\mb C}(\lambda^t)$. So it suffices to show that $\dim L_{\mb C}(\nu)_{\un1}=\binom{n-1}{t}$. This dimension is well-known to be equal to the number of standard tableaux of shape $\nu$ and weight $\un1$, that is, each integer in $\{1,\ldots,n\}$ must occur precisely once. The shape $\nu$ is a hook diagram as shown below.
$$
\ytableausetup
{mathmode, boxsize=1.5em}
\begin{xy}
(-9.9,4.0)*=<68pt,28pt>{}*\frm{^\}},
(-9.9,13)*{\text{$t+1$ boxes}},
(-23,-5.6)*=<10pt,68pt>{}*\frm{\{},
(-32,-2)*{\text{$n-t$}},
(-32,-5.6)*{\text{boxes}},
(-9.9,-5.2)*={\begin{ytableau}
1&\ &\none[\dots]&\ \\
\ \\
\none[\vcdots]\\
\
\end{ytableau}}
\end{xy}
$$
\vspace{.01cm}

\noindent Clearly the box in the top left corner must contain $1$ and the tableaux is completely determined by the choices for the other boxes in the first column. So our standard tableaux are in one-one correspondence with the $n-t-1$-subsets of $\{2,\ldots,n\}$.\\
(ii).\ We only have to consider the case of $\lambda^t$. By the same arguments as in (i), it suffices to show that the number of standard tableaux of shape $\nu$ and weight $r\un1$ is $\binom{r+s-1}{r}$, where $\nu:=r\lambda^t+r\un{1}$. So each integer in $\{1,\ldots,n\}$ must occur precisely $r$ times.
First assume $t=1$. Then $s=n-1$ and the shape $\nu$ is a diagram as shown below.
$$
\ytableausetup
{mathmode, boxsize=1.5em}
\begin{xy}
(-9.9,4.0)*=<100pt,28pt>{}*\frm{^\}},
(-9.9,13)*{\text{$2r$ boxes}},
(-28.5,-5.6)*=<10pt,68pt>{}*\frm{\{},
(-37,-2)*{\text{$n-1$}},
(-37,-5.6)*{\text{boxes}},
(-18.6,-14.5)*=<52pt,28pt>{}*\frm{_\}},
(-18.5,-22.3)*{\text{$r$ boxes}},
(-9.9,-5.2)*={\begin{ytableau}
1&\none[\dots]&1&\ &\none[\dots]&\ \\
\ &\none[\dots]&\ \\
\none[\vcdots]&\none&\none[\vcdots]\\
\ &\none[\dots]&\
\end{ytableau}}
\end{xy}
$$
\vspace{.01cm}

\noindent Clearly the first $r$ boxes in the top row must contain $1$. If we ignore the first row, then each column is a strictly increasing subsequence of $\{2,\ldots,n\}$ of length $n-2$. So it is determined by an integer from $\{2,\ldots,n\}$ (the one that does not occur). If we write these in the order of the columns, then the standardness implies that we get an increasing sequence. This sequence is what goes in the final $r$ boxes in the first row and it determines the tableaux completely. The number of such sequences is the same as the number of monomials of degree $r$ in $n-1$ variables, so it equals $\binom{n+r-2}{r}$.

Now assume that $t=n-2$. Then $s=n-1$ and the shape $\nu$ is a diagram as shown below.
$$
\ytableausetup
{mathmode, boxsize=1.5em}
\begin{xy}
(-10,-2)*=<118pt,28pt>{}*\frm{^\}},
(-9.9,6.5)*{\text{$(n-1)\,r$ boxes}},
(-21.7,-8.5)*=<52pt,28pt>{}*\frm{_\}},
(-21.5,-16.5)*{\text{$r$ boxes}},
(-9.9,-5.2)*={\begin{ytableau}
1&\none[\dots]&1&\ &\none[\dots]&\none[\dots]&\ \\
\ &\none[\dots]&\
\end{ytableau}}
\end{xy}
$$
\vspace{.01cm}

\noindent Again the first $r$ boxes in the top row must contain $1$. Now the diagram is completely determined by the second row which is an increasing subsequence of $\{2,\ldots,n\}$. So again we get $\binom{n+r-2}{r}$ standard tableaux. The case $t=n-1$ is trivial, since the shape $\nu$ is then a single row of length $nr$.
\end{proof}

We now define some basic $B$-semi-invariants in $k[\g]$. For $t\in\{1,\ldots,n-1\}$ and $I\subseteq\{2,\ldots,n\}$ with $|I|=t$ we define

\begin{equation}\label{eq.semiinvs}
\begin{split}
u_{t,I}&:=\det\big((\partial_{1i}s_j)_{n-t+1\le i\le n, j\in I}\big)\text{\quad and}\\
v_{t,I}&:=\det\big((\partial_{in}s_j)_{1\le i\le t, j\in I}\big).
\end{split}
\end{equation}
\vspace{.1cm}

\noindent Here the indices from $I$ are taken in their natural order and $\partial_{ij}$ is the partial derivative $\frac{\partial}{\partial \xi_{ij}}$.
Note that $u_{t,I}$ and $v_{t,I}$ are homogeneous of degree $(\sum_{j\in I}j)-t$.

Define the involution $\varphi$ of the vector space $\g$ by $\varphi(A)=PA^TP$, where $P$ is the permutation matrix corresponding to $w_0$ and $A^T$ denotes the transpose of $A$. Then $\varphi(g\cdot A)=P(g^{-1})^TP\cdot\varphi(A)$, where the dot denotes conjugation action. If we denote the corresponding automorphism of $k[\g]$ also by $\varphi$, then this formula also holds with $A$ replaced by $f\in k[\g]$. So $\varphi(k[\g]^U_\lambda)=k[\g]^U_{-w_0(\lambda)}$. In accordance with this we have $\varphi(u_{t,I})=\pm v_{t,I}$. 

We set up some notation which will give another, more general, way to construct the elements $u_{t,I}$ and $v_{t,I}$. This will make clear why they are $B$-semi-invariants (see the proof of Theorem~\ref{thm.basicinvs}(ii) below).
If $\lambda$ is a partition, then we denote its length by $l(\lambda)$. For $\lambda^+, \lambda^-\in X$ we put $[\lambda^+,\lambda^-]:=\lambda^+-w_0(\lambda^-)$. It is easy to see that for any $\lambda\in X$ dominant there exist unique partitions $\lambda^+$ and $\lambda^-$ with $l(\lambda^+)+l(\lambda^-)\le n$ and $\lambda=[\lambda^+,\lambda^-]$. In the sequel, when $\lambda^+$ and $\lambda^-$ are introduced after $\lambda$, they are supposed to have these properties. Let  $\lambda$ be a partition of $t$. We define the tableau $T_\lambda$ of shape $\lambda$ by $T_\lambda(i,j)=(\sum_{l=1}^{i-1}\lambda_l)+j$. Furthermore we define the subgroup $C_\lambda$ of the symmetric group $\Sym_t$ as the column stabiliser of $T_\lambda$.
Define the element $A_\lambda$ in the group algebra $k\la\Sym_t\ra$ by $A_\lambda=\sum_{\pi\in C_\lambda}{\rm sgn}(\pi)\pi$. Finally, define $e_\lambda\in V^{\ot t}$ and $e^*_\lambda\in {V^*}^{\ot t}$ by
$$
e_\lambda=\bigotimes_{i=1}^{l(\lambda)}e_i^{\ot\lambda_i}\text{\quad and\quad }
e^*_\lambda=\bigotimes_{i=1}^{l(\lambda)}{e^*_{n-i+1}}^{\ot\lambda_i}.
$$
Then, as is well-known (see e.g. \cite{BCHLLS}), $A_\lambda\cdot e_\lambda$ and $A_\lambda\cdot e^*_\lambda$ are highest weight vectors of weight $\lambda$ and $-w_0(\lambda)$ respectively.

Now let $\lambda=[\lambda^+,\lambda^-]$ be dominant and in the root lattice. Then $\lambda^+$ and $\lambda^-$ are partitions of the same number, $t$ say and we define $E_\lambda\in\g^{\ot t}$ as the element corresponding to $A_{\lambda^+}\cdot e_{\lambda^+}\ot A_{\lambda^-}\cdot e^*_{\lambda^-}\in V^{\ot t}\ot {V^*}^{\ot t}$ under the isomorphism $\g^{\ot t}\cong V^{\ot t}\ot {V^*}^{\ot t}$. By the above, $E_\lambda$ is a highest weight vector of weight $\lambda$.

For each $x\in\g$ we can extend the evaluation at $x$, considered as a linear map $\g^*\to k\subseteq k[\g]$, to a derivation of degree $-1$ of the algebra $k[\g]$. Then the evaluation at $E_{ij}$ extends to the derivation $\partial_{ij}$. So we obtain a $G$-equivariant linear map $\g\to\End(k[\g])$ and therefore also a $G$-equivariant linear map

$$\psi_t:\g^{\ot t}\to\End(k[\g]^{\ot t})\,.$$

\vspace{.2cm}

\noindent We denote the $G$-equivariant multiplication map $k[\g]^{\ot t}\to k[\g]$ by $\vartheta$.

\vfill\eject
\begin{thmgl}\label{thm.basicinvs}
\hspace{-.2cm} Let $t\in\{1,\ldots,n-1\}$ \hbox{and let $\lambda^t,\mu^t,u_{t,I},v_{t,I}$\hspace{-.05cm} be given by\hspace{-.05cm} \eqref{eq.weights}\hspace{-.1cm} and\hspace{-.05cm} \eqref{eq.semiinvs}.}
\begin{enumerate}[{\rm(i)}]
\item The $u_{t,I}$, $I\subseteq\{2,\ldots,n\}$ with $|I|=t$, form a basis of the $k[\g]^G$-module $k[\g]^U_{\lambda^t}$. The same holds for the $v_{t,I}$ and $\mu^t$.
\item Any nontrivial $k$-linear combination of the $u_{t,I}$, $I\subseteq\{2,\ldots,n\}$ with $|I|=t$, is an irreducible $B$-semi-invariant of weight $\lambda^t$. The same holds for the $v_{t,I}$ and $\mu^t$.
\end{enumerate}
\end{thmgl}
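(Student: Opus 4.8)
The plan is to prove everything for the $u_{t,I}$ and to deduce the statements for the $v_{t,I}$ at the end from the involution $\varphi$: since $\varphi(u_{t,I})=\pm v_{t,I}$, and $\varphi$ fixes each $s_j$ (because $PA^TP$ has the same characteristic polynomial as $A$), $\varphi$ is a $k[\g]^G$-algebra automorphism carrying $k[\g]^U_{\lambda^t}$ onto $k[\g]^U_{\mu^t}$ (recall $\mu^t=-w_0(\lambda^t)$) and preserving irreducibility. For the $u_{t,I}$ I would first record membership in $k[\g]^U_{\lambda^t}$. Writing $\lambda^t=[(t),(1^t)]$, the recipe preceding the theorem gives $E_{\lambda^t}=\sum_{\pi\in\Sym_t}\sgn(\pi)\,E_{1,b_{\pi(1)}}\ot\cdots\ot E_{1,b_{\pi(t)}}$, the antisymmetrization over the second indices running through $\{n-t+1,\ldots,n\}$. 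Applying the $G$-equivariant operator $\vartheta\circ\psi_t(E_{\lambda^t})$ to the $G$-invariant tensor $s_{j_1}\ot\cdots\ot s_{j_t}$, where $I=\{j_1<\cdots<j_t\}$, yields exactly $\pm u_{t,I}$; as $E_{\lambda^t}$ is a highest weight vector of weight $\lambda^t$ and the input is $G$-fixed, this shows $u_{t,I}\in k[\g]^U_{\lambda^t}$.

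Next I would reduce part (i) to a linear independence statement on the nilpotent cone. By Proposition~\ref{prop.free}(iii) together with Lemma~\ref{lem.mult}(i), $M:=k[\g]^U_{\lambda^t}$ is a free, hence projective, graded $k[\g]^G$-module of rank $\binom{n-1}{t}$, and by Proposition~\ref{prop.free}(i) restriction identifies $M/(k[\g]^G)^+M$ with $k[\mc N]^U_{\lambda^t}$, which also has dimension $\binom{n-1}{t}$. Since there are exactly $\binom{n-1}{t}$ admissible sets $I$, Lemma~\ref{lem.gradednakayama}(ii) reduces (i) to showing that the restrictions $u_{t,I}|_{\mc N}$ are linearly independent in $k[\mc N]$.

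The core of the argument, and the step I expect to be the main obstacle, is this linear independence. The plan is to make the restrictions completely explicit. Differentiating the coefficients of the characteristic polynomial via the adjugate of $tI-x$ gives $\partial_{pq}s_j=\sum_{l=0}^{j-1}(-1)^l s_{j-1-l}(x^l)_{qp}$; on $\mc N$, where $s_1=\cdots=s_n=0$, only the $l=j-1$ term survives, so $(\partial_{pq}s_j)|_{\mc N}=(-1)^{j-1}(x^{j-1})_{qp}$. Hence $u_{t,I}|_{\mc N}$ equals, up to a nonzero scalar, the $t\times t$ minor $\det\big((x^{j-1})_{i1}\big)_{n-t+1\le i\le n,\,j\in I}$, i.e. the minor with rows $\{n-t+1,\ldots,n\}$ and columns $I$ of the Krylov matrix whose $j$-th column is $x^{j-1}e_1$. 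To separate the different $I$, I would specialise to the regular nilpotents $x=gN_0g^{-1}$, where $N_0e_i=e_{i+1}$ for $i<n$ and $N_0e_n=0$, and $g$ ranges over the invertible matrices with $ge_1=e_1$. For such $x$ one has $x^{j-1}e_1=ge_j$, so $u_{t,I}|_{\mc N}$ specialises to the minor of $g$ with rows $\{n-t+1,\ldots,n\}$ and columns $I$. As $I$ ranges over the $t$-subsets of $\{2,\ldots,n\}$ these are pairwise distinct minors of a matrix whose entries in columns $2,\ldots,n$ are free indeterminates, and distinct minors are linearly independent; therefore any dependence $\sum_Ic_Iu_{t,I}|_{\mc N}=0$ forces all $c_I=0$.

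For part (ii), let $u=\sum_Ic_Iu_{t,I}$ be any nontrivial combination. Being in $k[\g]^U_{\lambda^t}$, it is a $B$-semi-invariant of weight $\lambda^t$, and $\lambda^t$ is primitive. Since restriction to $\mc N$ is graded and the $u_{t,I}|_{\mc N}$ are linearly independent, the top-degree component of $u$ is a $B$-semi-invariant of weight $\lambda^t$ that restricts to a nonzero element of $k[\mc N]$; Lemma~\ref{lem.irreducible} then yields that $u$ is irreducible. The corresponding assertions for $v_{t,I}$ and $\mu^t$ follow at once by applying $\varphi$.
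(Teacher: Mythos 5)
Your proposal is correct, and its outer skeleton is the same as the paper's: you reduce between the $u_{t,I}/\lambda^t$ and $v_{t,I}/\mu^t$ statements via the involution $\varphi$, you identify $u_{t,I}=\pm\,\vartheta\big(\psi_t(E_{\lambda^t})\cdot s_{i_1}\ot\cdots\ot s_{i_t}\big)$ to get semi-invariance of weight $\lambda^t$, you invoke Proposition~\ref{prop.free}, Lemma~\ref{lem.gradednakayama} and Lemma~\ref{lem.mult}(i) to reduce (i) to linear independence of the restrictions to $\mc N$, and you combine primitivity of $\lambda^t$ with Lemma~\ref{lem.irreducible} for (ii). Where you genuinely diverge is in the one nontrivial verification, the linear independence on $\mc N$. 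The paper argues combinatorially: using \eqref{eq.diffdet} and \eqref{eq.evaluation} it builds, for each admissible $I$, a special nilpotent $A_{\sigma(I)}$ and shows that the evaluation matrix $\big(v_{t,I}(A_{\sigma(J)})\big)_{I,J}$ is $\pm$ the identity; this apparatus (the matrices $A_\sigma$ and the evaluation facts) is then reused in the proof of Theorem~\ref{thm.algindinvs}. You instead prove the closed formula $(\partial_{pq}s_j)|_{\mc N}=(-1)^{j-1}(x^{j-1})_{qp}$ from the adjugate expansion of the characteristic polynomial -- an identity with integer coefficients, hence valid in every characteristic, which matters here -- so that $u_{t,I}|_{\mc N}$ becomes, up to sign, the minor on rows $\{n-t+1,\ldots,n\}$ and columns $I$ of the Krylov matrix with columns $e_1,xe_1,\ldots,x^{n-1}e_1$; specialising to $x=gN_0g^{-1}$ with $ge_1=e_1$ turns these into the corresponding minors of $g$, and distinct minors with a common row set have disjoint monomial supports, so they are linearly independent. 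Both arguments are complete; yours buys an explicit description of the restrictions (and treats all $I$ simultaneously with a classical identity), while the paper's pointwise evaluation data is tailored for reuse in Theorem~\ref{thm.algindinvs}. Two small points of phrasing: strictly it is the map $E\mapsto\vartheta\big(\psi_t(E)\cdot s_{i_1}\ot\cdots\ot s_{i_t}\big)$ that is $G$-equivariant, not the single operator $\vartheta\circ\psi_t(E_{\lambda^t})$ (this is clearly what your argument uses); and in (ii) your observation that the top-degree component of a nontrivial combination is again a nontrivial combination of the $u_{t,I}$ of maximal degree is exactly what makes Lemma~\ref{lem.irreducible} applicable -- the paper leaves this implicit.
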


\begin{proof}
(i).\ Using the involution $\varphi$ we see that we only have to prove the assertion for $\mu^t$ and the $v_{t,I}$. By Proposition~\ref{prop.free} and Lemmas~\ref{lem.gradednakayama} and \ref{lem.mult}(i) it suffices to show that the restrictions of the $v_{t,I}$ to $\mc N$ are linearly independent.
For $\Lambda_1,\Lambda_2\subseteq\{1,\ldots,n\}$ and $A=(a_{ij})_{1\le i,j\le n}\in\g$ set $A_{\Lambda_1,\,\Lambda_2}=(a_{ij})_{(i,j)\in\Lambda_1\times\Lambda_2}$, where the indices are taken in their natural order. Furthermore, put $\mc{X}=(\xi_{ij})_{1\le i,j\le n}$. If $|\Lambda_1|=|\Lambda_2|$, then we have, as in \cite{PrT}, the following basic fact which follows from the Laplace expansion formulae for the determinant:
\begin{equation}\label{eq.diffdet}
\partial_{ij}\big(\det(\mc{X}_{\Lambda_1,\,\Lambda_2})\big)=
\begin{cases}
\pm\det(\mc{X}_{\Lambda_1\sm\{i\},\,\,\Lambda_2\sm\{j\}})\quad\, \mbox{when}\ \, (i,j)\in \Lambda_1\times \Lambda_2,\\
0\qquad\qquad\qquad\qquad\quad\ \ \,\mbox{when}\ \, (i,j)\notin \Lambda_1\times \Lambda_2.
\end{cases}
\end{equation}
For $l\le n$ we have $s_l=\sum_{\Lambda}\det(\mc{X}_{\Lambda,\,\Lambda})$ where the sum ranges over all $l$-subsets $\Lambda$ of $\{1,\ldots,n\}$.

For a sequence $\sigma=(\sigma_1,\ldots,\sigma_s)$ of distinct integers in $\{1,\ldots,n\}$ we define $A_\sigma\in\End(V)$ by $A_\sigma(e_{\sigma_i})=e_{\sigma_{i-1}}$ for $i\in\{2,\ldots,s\}$ and $A_\sigma(e_i)=0$ for $i\notin\{\sigma_2,\ldots,\sigma_s\}$. Then $A_\sigma$ is nilpotent and its restriction to the span of the $e_{\sigma_i}$, $1\le i\le s$, is regular.

If $\Lambda_1,\Lambda_2\subseteq\{1,\ldots,n\}$ with $|\Lambda_1|=|\Lambda_2|>0$ and $\det(\mc{X}_{\Lambda_1,\Lambda_2})(A_\sigma)\ne0$, then
\begin{flalign}\label{eq.evaluation}
\begin{split}
&\bullet\ \Lambda_1\subseteq\{\sigma_1,\ldots,\sigma_{s-1}\}\text{\ and\ }\Lambda_2\subseteq\{\sigma_2,\ldots,\sigma_s\},\\
&\bullet\ \sigma_j\in\Lambda_1 \Rightarrow\sigma_{j+1}\in\Lambda_2\text{\ for all\ }j\in\{1,\ldots,s-1\},\\
&\bullet\ \sigma_j\in\Lambda_2 \Rightarrow\sigma_{j-1}\in\Lambda_1\text{\ for all\ }j\in\{2,\ldots,s\}.
\end{split}&
\end{flalign}

Let $\sigma$ be as above with $\sigma_1=n$. Let $i\in\{1,\ldots,n\}$, let $\Lambda\subseteq\{1,\ldots,n\}$ with $|\Lambda|=l$ and assume that
$\big(\partial_{i n}\det(\mc{X}_{\Lambda,\,\Lambda})\big)(A_\sigma)\ne0$. Then it follows from \eqref{eq.diffdet} and \eqref{eq.evaluation} that $i=\sigma_l$, that $\Lambda=\{\sigma_1,\ldots,\sigma_l\}$ and that $\big(\partial_{i n}\det(\mc{X}_{\Lambda,\,\Lambda})\big)(A_\sigma)=\pm1$.
So for such a $\sigma$ we have $(\partial_{i n}s_l)(A_\sigma)\ne0\Rightarrow l\le s$, $i=\sigma_l$ and $(\partial_{i n}s_l)(A_\sigma)=\pm1$.

So for $\sigma=(\sigma_1,\ldots,\sigma_s)$ and $\tau=(\tau_1,\ldots,\tau_t)$ sequences of distinct integers in $\{1,\ldots,n\}$ and $\pi\in\Sym_t$ with $\sigma_1=n$ and $(\partial_{\pi_1 n}s_{\tau_1})\cdots(\partial_{\pi_t n}s_{\tau_t})(A_\sigma)\ne0$ we have
\begin{enumerate}[{\rm(1)}]
\item $\tau_i\le s$ for all $i\in\{1,\ldots,t\}$,
\item $\sigma\circ\tau=\pi$,
\item $(\partial_{\pi_1 n}s_{\tau_1})\cdots(\partial_{\pi_t n}s_{\tau_t})(A_\sigma)=\pm1$.
\end{enumerate}
Note that (1) implies that $\sigma(\{\tau_1,\ldots,\tau_t\})=\{1,\ldots,t\}$, so the set $\{\tau_1,\ldots,\tau_t\}$ is determined by $\sigma$.

Now we choose for each subset $I=\{i_1>\cdots>i_t\}$ of $\{2,\ldots,n\}$ a sequence $\sigma(I)$ of $i_1\ge t+1$ distinct integers in $\{1,\ldots,n\}$ with $\sigma(I)_1=n$ and $\sigma(I)_{i_j}=j$ for all $j\in\{1,\ldots,t\}$. Then we get for $I,J\subseteq\{2,\ldots,n\}$ with $|I|=|J|=t$ that
$$v_{t,I}(A_{\sigma(J)})=
\begin{cases}
\pm1\text{\ \ if\ } I=J,\\
\,0\text{\ \quad otherwise.}
\end{cases}
$$
So the linear map $f\mapsto f(A_{\sigma(J)})_J:k[\mc N]\to k^{\binom{n-1}{t}}$ sends the family $(v_{t,I}|_{\mc N})_I$ to a basis and
therefore the restrictions of the $v_{t,I}$ to $\mc N$ are linearly independent.\\
(ii).\ Let $I\subseteq\{2,\ldots,n\}$ with $|I|=t$ and write $I=\{i_1<\cdots<i_t\}$. Then it follows immediately from the definitions that
$u_{t,I}=\vartheta\big(\psi_t(F)\cdot s_{i_1}\ot\cdots\ot s_{i_t}\big)$, where
$F=\sum_\pi{\rm sgn}(\pi)E_{1,\pi_{n-t+1}}\ot\cdots\ot E_{1,\pi_n}$, the sum over all permutations $\pi\in\Sym(\{n-t+1,\ldots,n\})$. Now $\lambda_t^+=t\ve_1$ and $\lambda_t^-=\ve_1+\cdots+\ve_t$. So $A_{\lambda_t^+}=\id$, $A_{\lambda_t^-}=\sum_{\pi\in\Sym_t}{\rm sgn}(\pi)\pi$, $e_{\lambda_t^+}=e_1^{\ot t}$, $e_{\lambda_t^-}=e_{n-t+1}^*\ot\cdots\ot e_{n}^*$. It follows that under the isomorphism $\g^{\ot t}\cong V^{\ot t}\ot {V^*}^{\ot t}$, $F$ corresponds to $A_{\lambda_t^+}\cdot e_{\lambda_t^+}\ot A_{\lambda_t^-}\cdot e^*_{\lambda_t^-}$. So $F=E_{\lambda^t}$.
Similarly, we get $v_{t,I}=\vartheta\big(\psi_t(E_{\mu^t})\cdot s_{i_1}\ot\cdots\ot s_{i_t}\big)$. Since the $s_i$ are invariants, this shows that $u_{t,I}$ and $v_{t,I}$ are $B$-semi-invariants of the given weights. Since $\lambda_t$ and $\mu_t$ are primitive, the assertion follows from Lemma~\ref{lem.irreducible} and the linear independence proved in (i).
\end{proof}

\begin{remsgl}
1.\ In \cite[Rem.~26]{Kos} Kostant gave an explicit basis for the isotypic component of the space of harmonics $H$ corresponding to the highest root. So the statement of  Theorem~\ref{thm.basicinvs} in the case of $\lambda^1$ extends to all complex reductive groups.\\
2.\ Assume $k=\mb C$, let $t\le s$ and let $\lambda=[\lambda^+,\lambda^-]$ be dominant and in the root lattice with $\lambda^+$ and $\lambda^-$ partitions of $t$. Then $(\g^{\ot s})^U_\lambda\cong(V^{\ot s}\ot {V^*}^{\ot s})^U_\lambda$ is a simple module for the {\it walled Brauer algebra} ${\mf B}_{s,s}(n)$, see \cite{BCHLLS}. Note that in the definition of the vectors $t_{\tau,\un m, \un n}$ in \cite[Def.~2.4]{BCHLLS} the symmetrisation can be omitted. Above we only considered the case $s=t$, the lowest tensor power of $\g$ which contains $L_{\mb C}(\lambda)$. Then $(\g^{\ot s})^U_\lambda$ is an irreducible $\Sym_t\times\Sym_t$-module and the ideal of ${\mf B}_{t,t}(n)$ spanned by the diagrams with at least one horizontal edge acts as $0$.\\
3.\ Another natural definition of $e_\lambda$ and $e^*_\lambda$ is $e_\lambda=\bigotimes_{i=1}^{l(\lambda')}\otimes_{j=1}^{\lambda_i'}e_j$ and $e^*_\lambda=\bigotimes_{i=1}^{l(\lambda')}\otimes_{j=1}^{\lambda_i'}e^*_{n-j+1}$, where $\lambda'$ denotes the partition of $t$ whose shape is the transpose of that of $\lambda$. In the definition of $A_\lambda$ one then has to replace $T_\lambda$ by its transpose (or $C_\lambda$ by the row stabiliser $R_\lambda$). Then $A_\lambda\cdot e_\lambda$ and $A_\lambda\cdot e^*_\lambda$ are again highest weight vectors of weight $\lambda$ and $-w_0(\lambda)$ and one can define $E_\lambda$ as before. Note that this $E_\lambda$ is $\Sym_t\times\Sym_t$-conjugate to the original one.\\
4.\ Assume $k=\mb C$. Theorem~\ref{thm.basicinvs} answers the so-called {\it first occurrence} question for $k[\g]$ and the weights $\lambda^t$ and $\mu^t$: The lowest degree where $L_{\mb C}(\lambda^t)$ (or $L_{\mb C}(\mu^t)$) occurs in $k[\g]$ is $(\sum_{i=2}^{t+1}i)-t=\frac{1}{2}t(t+1)$.
\end{remsgl}

\begin{thmgl}\label{thm.algindinvs}
Assume $t=1$ or $n\ge3$ and $t\in\{1,n-2,n-1\}$. Then the $u_{t,I}$, $I\subseteq\{2,\ldots,n\}$ with $|I|=t$, are algebraically independent over $k[\g]^G$ and generate the $k[\g]^G$-algebra $\bigoplus_{r\ge0} k[\g]^U_{r\lambda^t}$. Furthermore, the same holds for the $v_{t,I}$ and $\mu^t$.
\end{thmgl}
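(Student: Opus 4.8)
Throughout write $R=k[\g]^G$ and $s=\binom{n-1}{t}$, and let $A=\bigoplus_{r\ge0}k[\g]^U_{r\lambda^t}$, a commutative $R$-algebra graded by $r$. Let $\Phi\colon R[X_I]\to A$ be the $R$-algebra homomorphism sending the variable $X_I$ (one for each $I\subseteq\{2,\dots,n\}$ with $|I|=t$) to $u_{t,I}$, and make it graded by declaring $\deg X_I=1$. The plan is to prove that $\Phi$ is an isomorphism: its injectivity is exactly the asserted algebraic independence of the $u_{t,I}$ over $R$, and its surjectivity is exactly the assertion that they generate the $R$-algebra $A$. The starting observation is that $\Phi$ is, in each degree, a map between free $R$-modules of the same finite rank: by Proposition~\ref{prop.free}(iii) and the dimension identity there, together with Lemma~\ref{lem.mult}(ii), $A^r=k[\g]^U_{r\lambda^t}$ is $R$-free of rank $\binom{r+s-1}{r}$, and the degree-$r$ part of $R[X_I]$ is $R$-free of the same rank $\binom{r+s-1}{r}$.

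I would reduce the whole theorem to one statement: the restrictions $\bar u_{t,I}:=u_{t,I}|_{\mc N}$ are algebraically independent over $k$. Reducing $\Phi$ modulo $R^+=(s_1,\dots,s_n)$ yields, by Proposition~\ref{prop.free}(i), a graded $k$-algebra map $\bar\Phi\colon k[X_I]\to\bar A:=\bigoplus_{r\ge0}k[\mc N]^U_{r\lambda^t}$, where $\bar A$ is a graded subalgebra of the domain $k[\mc N]$ ($\mc N$ being irreducible) with $\dim_k\bar A^r=\binom{r+s-1}{r}$ by Lemma~\ref{lem.mult}(ii). Algebraic independence of the $\bar u_{t,I}$ says $\bar\Phi$ is injective; since source and target have equal finite dimension in each degree, $\bar\Phi$ is then an isomorphism, so every $\bar\Phi^r$ is surjective. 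Applying the Graded Nakayama Lemma~\ref{lem.gradednakayama}(i) to $A$ as a graded $R$-module, with the monomials in the $u_{t,I}$ as candidate generators (their images in $A/R^+A=\bar A$ spanning because $\bar\Phi$ is onto), shows $\Phi$ is surjective. Finally, a surjection between free $R$-modules of equal finite rank is an isomorphism --- in bases it is a square matrix with a right inverse, hence of unit determinant since $R^\times=k^\times$ --- so $\Phi^r$, and thus $\Phi$, is an isomorphism. The statement for the $v_{t,I}$ and $\mu^t$ then follows by applying the involution $\varphi$, which fixes $R$, carries $k[\g]^U_{r\lambda^t}$ to $k[\g]^U_{r\mu^t}$ and $u_{t,I}$ to $\pm v_{t,I}$.

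It remains to prove the algebraic independence of the $\bar u_{t,I}$, equivalently the dominance of $\Psi\colon\mc N\to\mb A^s$, $x\mapsto(u_{t,I}(x))_I$, and this is where the hypothesis on $t$ is used. For $t=n-1$ one has $s=1$, and the single function $u_{n-1,\{2,\dots,n\}}$ restricts to a nonzero homogeneous element of positive degree in the domain $k[\mc N]$ by Theorem~\ref{thm.basicinvs}(i), hence is transcendental over $k$. For $t=1$ and $t=n-2$ one has $s=n-1$, and I would prove dominance by exhibiting a regular nilpotent element at which the Jacobian of the $u_{t,I}$ in the coordinates $\xi_{ab}$ has rank $n-1$; the natural test points are assembled from the nilpotent strings $A_\sigma$ of the proof of Theorem~\ref{thm.basicinvs}(i), whose evaluations on the $\partial_{1i}s_j$ are completely controlled by \eqref{eq.diffdet} and \eqref{eq.evaluation}. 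Equivalently, one shows directly that the degree-$r$ monomials in the $\bar u_{t,I}$ are linearly independent by producing, for each such monomial, a nilpotent element on which the evaluation matrix of the monomials is triangular with nonzero diagonal.

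The main obstacle is exactly this last step. Everything preceding it is formal once the Hilbert series $\sum_r\binom{r+s-1}{r}q^r$ of $\bar A$ is known, and that series is available only for the listed weights, where Lemma~\ref{lem.mult}(ii) was established by the hook- and two-row-tableau counts. Note that the mere fact that every coordinate axis of $\mb A^s$ meets the image of $\Psi$ --- which the points $A_{\sigma(J)}$ of Theorem~\ref{thm.basicinvs}(i) already guarantee --- does not force dominance once $s\ge3$, since an irreducible cone in $\mb A^s$ can contain all coordinate axes without being all of $\mb A^s$; a genuine rank computation is needed. This is also why the theorem is confined to $t\in\{1,n-2,n-1\}$: for other $t$ neither the Hilbert series nor the algebraic independence is in hand.
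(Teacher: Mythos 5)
Your formal reduction is sound and is essentially the paper's own: by Proposition~\ref{prop.free}, Lemma~\ref{lem.gradednakayama} and Lemma~\ref{lem.mult}(ii), everything comes down to showing that the restrictions of the $u_{t,I}$ (equivalently, via the involution $\varphi$, of the $v_{t,I}$) to $\mc N$ are algebraically independent over $k$; your Hilbert-series bookkeeping and your treatment of $t=n-1$ (a nonzero homogeneous element of positive degree in the domain $k[\mc N]$ is transcendental over $k$) are both fine and agree with the paper. But for $t\in\{1,n-2\}$ you have not proved this independence: you only say you ``would'' exhibit a regular nilpotent element where a Jacobian has rank $n-1$, and you yourself flag this as the main obstacle. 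That missing step is precisely the content of the paper's proof, and it is not a formality: the paper takes $A=A_\sigma$ with $\sigma=(n,n-1,\ldots,1)$, fixes the index tuple $\alpha=\big((1,1),\ldots,(1,n),(n,1),\ldots,(n,n-2),(2,1)\big)$, and carries out a delicate case analysis (the cases $j\ge3$ and $j=2$ for $w_j=v_{n-2,\{2,\ldots,n\}\sm\{j\}}$, using \eqref{eq.diffdet}, \eqref{eq.evaluation} and the product rule applied to second-order derivatives of the $s_l$) to conclude that $\det(M_\alpha)(A)=\pm1$. Without some such explicit computation the theorem is not proved; Theorem~\ref{thm.basicinvs} only gives \emph{linear} independence of the restrictions, which, as you correctly note, does not imply algebraic independence.

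Moreover, the criterion you propose to verify is not the right one. Dominance of $\Psi\colon\mc N\to k^s$ requires surjectivity of the differential $d_x\Psi$ on the tangent space $T_x(\mc N)$ at a smooth point $x$, and for such $x$ one has $T_x(\mc N)=\bigcap_i\Ker(d_xs_i)$. Rank $n-1$ of the Jacobian of the $u_{t,I}$ in the ambient coordinates $\xi_{ab}$ does not give this: a nontrivial linear combination of the $d_xu_{t,I}$ could lie in the span of the $d_xs_i$ and hence vanish identically on $T_x(\mc N)$, making the restricted differentials dependent even though the ambient rank is $n-1$. What must be shown --- and what the paper shows --- is that the differentials of $s_1,\ldots,s_n$ \emph{together with} those of the $v_{t,I}$ are linearly independent at $x$, i.e.\ that a $(2n-1)$-square submatrix of the combined Jacobian is invertible there. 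So your sketch needs this correction even as a plan, before the actual computation (the real work of the theorem) can be attempted.
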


\begin{proof}
Using the involution $\varphi$ we see that we only have to prove the assertion for $\mu^t$ and the $v_{t,I}$. By Proposition~\ref{prop.free} and Lemmas~\ref{lem.gradednakayama} and \ref{lem.mult}(ii) it suffices to show that the restrictions of the $v_{t,I}$ to $\mc N$ are algebraically independent. If $t=n-1$, then this follows from the fact that $v_{n-1,\{2,\ldots,n\}}|_{\mc N}$ is nonzero by Theorem~\ref{thm.basicinvs}(i) and of degree $>0$. Now we observe the following. If $f_1,\ldots,f_l\in k[\mc N]$, then the morphism $(f_1,\ldots,f_l):\mc N\to k^l$ is dominant if and only if the $f_i$ are algebraically independent and its differential at a point $x\in\mc N$ is surjective if and only if the differentials at $x$ of the $f_i$ are linearly independent. So, by \cite[AG 17.3]{Bo}, it suffices to show that the differentials of the $v_{t,I}|_{\mc N}$ are linearly independent at some smooth point $x\in\mc N$. For $x\in\mc N$ we have that $T_x(\mc N)$ is the intersection of the kernels of the differentials $d_xs_i$ and $x$ is a smooth point of $\mc N$ if and only if the $d_xs_i$ are linearly independent. So it suffices to show that the differentials of the $s_i$ and the $v_{t,I}$ at some nilpotent element $x$ are together linearly independent. We will take $x=A=A_\sigma$, where $\sigma=(n,n-1,\ldots,1)$ and the notation is as in the proof of Theorem~\ref{thm.basicinvs}(i). Put $$\alpha=\big((1,1),\ldots,(1,n),(n,1),\ldots,(n,n-2),(2,1)\big)\,.$$ Let $M$ be the Jacobian matrix of $s_1,\ldots,s_n$ and the $v_{t,I}$ and let $M_\alpha$ be the $(2n-1)$-square submatrix of $M$ consisting of the columns with indices from $\alpha$. We will show that $\det(M_\alpha)(A)=\pm1$. This will prove the required linear independence.

From \eqref{eq.diffdet} and \eqref{eq.evaluation} we deduce easily that $(\partial_{ni}s_j)(A)=0$ and $(\partial_{21}s_j)(A)=0$ for all $i\in\{1,\ldots,n-2\}$ and $j\in\{1,\ldots,n\}$ and that $(\partial_{1i}s_j)(A)=\pm\delta_{ij}$ for all $i,j\in\{1,\ldots,n\}$. So it suffices to show that the matrix $(\partial_{\alpha_i}v_{t,J})(A)_{n+1\le i\le 2n-1, J}$ is diagonal with the diagonal entries equal to $\pm1$, when the subsets $J$ are suitably ordered.

Assume $t=n-2$. For $j\in\{2,\ldots,n\}$ put $w_j=v_{t,\{2,\ldots,n\}\sm\{j\}}$. Put $\tau(j)=(2,\ldots,j-1,j+1,\ldots,n)$. Then we have
\begin{equation}\label{eq.diffw_j}
\partial_{\alpha_m}w_j=\partial_{\alpha_m}\sum\pm(\partial_{\pi_1,n}s_{\tau(j)_1})\cdots(\partial_{\pi_{n-2},n}s_{\tau(j)_{n-2}})\,,
\end{equation}
where the sum is over all $\pi\in\Sym(\{1,\ldots,n-2\})$. We can expand this further by applying the product rule for differentiation. Then each term in \eqref{eq.diffw_j} produces $n-2$ terms, the differentiation $\partial_{\alpha_m}$ being applied to each factor in turn. As in the proof of Theorem~\ref{thm.basicinvs} we have
\begin{equation}\label{eq.diffs_l}
(\partial_{in}s_l)(A)\ne0\Rightarrow (\partial_{in}s_l)(A)=\pm1\text{\ and\ }i=\sigma_l=n-l+1.
\end{equation}

Now assume $j\ge 3$, i.e. $\sigma_j\le n-2$. Then $\sigma_{\tau(j)_1}=\sigma_2=n-1$. Since $\pi$ never takes the value $n-1$, the only term in the expanded form of
\begin{equation}\label{eq.term}
\partial_{\alpha_m}\big((\partial_{\pi_1,n}s_{\tau(j)_1})\cdots(\partial_{\pi_{n-2},n}s_{\tau(j)_{n-2}})\big)
\end{equation}
that can be nonzero at $A$ is
$\big(\partial_{\alpha_m}(\partial_{\pi_1,n}s_2)\big)(\partial_{\pi_2,n}s_{\tau(j)_2})\cdots(\partial_{\pi_{n-2},n}s_{\tau(j)_{n-2}})$.
By \eqref{eq.diffs_l} we must then have $\pi_i=\sigma_{\tau(j)_i}$ for all $i\in\{2,\ldots,n-2\}$ and $\pi_1=\sigma_j$. Finally \eqref{eq.diffdet} and \eqref{eq.evaluation} give us then that $\alpha_m=(n,\sigma_j)$ and that the value of the  term is $\pm 1$.

Now assume that $j=2$. Then $\tau(2)=(3,\ldots,n)$. So for a term in the expanded form of \eqref{eq.term} to be nonzero at $A$ we must, by \eqref{eq.diffs_l}, have $\pi_i=\sigma_{\tau(2)_i}$ for all but one and therefore for all $i\in\{1,\ldots,n-2\}$. So $\pi=(n-2,\ldots,1)$. Now we check that $\big(\partial_{nl}(\partial_{\pi_i,n}s_{\tau(2)_i})\big)(A)=0$ for all $l,i\in\{1,\ldots,n-2\}$ by considering a term $\det(A_{\Lambda\sm\{\pi_i,n\},\,\Lambda\sm\{l,n\}})$ for $\Lambda\subseteq\{1,\ldots,n\}$ with $|\Lambda|=\tau(2)_i=i+2$. Assume first $1\in\Lambda$. Then $\pi_i=1$, since otherwise the first row of $A_{\Lambda\sm\{\pi_i,n\},\,\Lambda\sm\{l,n\}}$ would be zero. So $i=n-2$ and $\Lambda=\{1,\ldots,n\}$. But then the column of index $n-1$ in $A_{\Lambda\sm\{\pi_i,n\},\,\Lambda\sm\{l,n\}}$ is zero. So $1\notin\Lambda$. The cases $l<\pi_i$ and $l=\pi_i$ are now easily dealt with using \eqref{eq.diffdet} and \eqref{eq.evaluation}. So assume $\pi_i<l$. Then we get, using \eqref{eq.diffdet} and \eqref{eq.evaluation}, $\Lambda=\{\pi_i,\ldots,l,n\}$. Then $i+2=|\Lambda|=l-n+i+3$, so $l=n-1$, which is impossible. Finally we check that $\big(\partial_{2,1}(\partial_{\pi_i,n}s_{i+2})\big)(A)=\pm\delta_{i,n-2}$, by considering a term $\det(A_{\Lambda\sm\{2,\pi_i\},\,\Lambda\sm\{1,n\}})$ for $\Lambda\subseteq\{1,\ldots,n\}$ with $|\Lambda|=i+2$. Since $1\in\Lambda$ we must have $\pi_i=1$, so $i=n-2$ and $\Lambda=\{1,\ldots,n\}$. The value of this term is then $\pm1$.

In conclusion we have shown that, for $m\in\{n+1,\ldots,2n-1\}$ and $j\in\{2,\ldots,n\}$, $(\partial_{\alpha_m}w_j)(A)=\pm\delta_{m-n,w_0(j)}$.

Now assume $t=1$. Then we put $w_j=v_{i,\{j\}}=\partial_{1,n}s_j$ and we show that, for $m\in\{n+1,\ldots,2n-1\}$ and $j\in\{2,\ldots,n\}$, $(\partial_{\alpha_m}w_j)(A)=\pm\delta_{m-n,j-1}$. Since this case is much easier we leave it to the reader.
\end{proof}

\begin{remsgl}
1.\ Assume $k=\mb C$, let $t\in\{1,n-2,n-1\}$ and let $r\ge0$. Then, by Theorem~\ref{thm.algindinvs}, the lowest degree where $L_{\mb C}(r\lambda^t)$ (or $L_{\mb C}(r\mu^t)$) occurs in $k[\g]$ is $r\big((\sum_{i=2}^{t+1}i)-t\big)=\frac{1}{2}rt(t+1)$.\\
2.\ Computer calculations suggest that for $t\notin\{1,n-2,n-1\}$ and $r\ge2$ $\dim k[\mc N]^U_{r\lambda^t}<\binom{r+s-1}{r}$, where $s=\dim k[\mc N]^U_{\lambda^t}$. So for such $t$ one cannot expect the $u_{t,I}$ to be algebraically independent over $k[\g]^G$, but one could still conjecture that they generate the $k[\g]^G$-algebra $\bigoplus_{r\ge0} k[\g]^U_{r\lambda^t}$. Similar remarks apply to $\mu^t$ and the $v_{t,I}$.\\
3.\ With a bit more effort one can show that the matrix $M_\alpha(A)$ from the proof of Theorem~\ref{thm.algindinvs} is diagonal with the diagonal entries equal to $\pm1$.
\end{remsgl}

\section{$\GL_3$}\label{s.GL_3}
In this section we describe the algebra $k[\g]^U$ in the case of $\GL_3$. So throughout this section $n=3$, $G=\GL_3$ and $\g=\gl_3$. We have $\lambda^1=\mu^1=\varpi_1+\varpi_2$, $\lambda^2=3\varpi_1=(2,-1,-1)$ and $\mu^2=3\varpi_2=(1,1,-2)$. Note that a weight $l_1\varpi_1+l_2\varpi_2$ is in the root lattice if and only if $3|(l_1-l_2)$. Put $\mc X=(\xi_{ij})_{1\le i,j\le 3}$. For $i,j\in\{1,2,3\}$ we denote by $\mc X^{(i,j)}$ the matrix $\mc X$ with the $i$-th row and $j$-th column omitted and we denote its determinant by $|\mc X^{(i,j)}|$. We put
\begin{align*}
d_1&=\xi_{21}|\mc X^{(1,3)}|+\xi_{31}|\mc X^{(1,2)}|=-u_{2,\{2,3\}}\text{\ \ and}\\
d_2&=\xi_{31}|\mc X^{(2,3)}|+\xi_{32}|\mc X^{(1,3)}|=v_{2,\{2,3\}}\,.
\end{align*}

\begin{lemgl}\label{lem.multGL_3}
Let $\lambda=l_1\varpi_1+l_2\varpi_2$ be dominant and in the root lattice. Put $a=\min(l_1,l_2)$. Then $\dim L_{\mb C}(\lambda)_{\un{0}}=a+1$.
\end{lemgl}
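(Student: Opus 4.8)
The plan is to reduce the computation of $\dim L_{\mb C}(\lambda)_{\un 0}$ to a count of semistandard tableaux, exactly as in the proof of Lemma~\ref{lem.mult}, and then to carry out this count explicitly for $\GL_3$. First I would twist by a power of the determinant. Put $m=\frac{1}{3}(l_1+2l_2)$; this is an integer precisely because $\lambda$ lies in the root lattice, i.e. $3\mid(l_1-l_2)$. Then $\nu:=\lambda+m\,\un 1$ is the partition $(l_1+l_2,l_2,0)$, and $L_{\mb C}(\nu)=\det^{\ot m}\ot L_{\mb C}(\lambda)$ carries the zero weight of $L_{\mb C}(\lambda)$ to the weight $m\,\un 1=(m,m,m)$. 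Hence $\dim L_{\mb C}(\lambda)_{\un 0}=\dim L_{\mb C}(\nu)_{(m,m,m)}$, which equals the number of semistandard tableaux of shape $\nu$ with content $(m,m,m)$, that is, with each of $1,2,3$ occurring exactly $m$ times.

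Second, I would count these tableaux directly, exploiting that $\nu$ has only two nonempty rows. Since the columns are strictly increasing, the second row contains no $1$, so it has the form $2^{k}3^{\,l_2-k}$ for some $k$; its $k$ entries equal to $2$ occupy the leftmost columns and force the boxes of the first row above them to equal $1$. The content condition then determines the first row completely as $1^{m}2^{\,m-k}3^{\,m-l_2+k}$, so the whole tableau is pinned down by the single parameter $k$. The remaining requirements, namely nonnegativity of the three multiplicities in the first row together with column-strictness in the columns where the second row equals $3$, translate into the inequalities $\max(0,l_2-m)\le k\le\min(l_2,m,2m-l_2)$.

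Finally I would evaluate the number of admissible $k$, which is $\min(l_2,m,2m-l_2)-\max(0,l_2-m)+1$. Using $m-l_2=\frac{1}{3}(l_1-l_2)$ one sees that the binding bounds switch according to the sign of $l_1-l_2$: when $l_1\ge l_2$ the admissible interval is $[0,l_2]$, giving $l_2+1$ tableaux, and when $l_2\ge l_1$ it is $[\,l_2-m,\,2m-l_2\,]$, giving $(2m-l_2)-(l_2-m)+1=3m-2l_2+1=l_1+1$ tableaux. In both cases the count equals $\min(l_1,l_2)+1=a+1$, as claimed. The only genuine obstacle is the bookkeeping of which of the three upper bounds and two lower bounds is binding; this is exactly what the case split on the sign of $l_1-l_2$ resolves, and the rest is routine verification.
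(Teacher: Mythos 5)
Your proof is correct and takes essentially the same approach as the paper: twist by $\det^{\ot m}$ to reduce to counting semistandard tableaux of shape $(l_1+l_2,l_2,0)$ with content $(m,m,m)$, then distinguish the cases $l_1\ge l_2$ and $l_2\ge l_1$. The only difference is that you carry out in full the tableau count (parametrised by the number $k$ of $2$'s in the second row, with $\max(0,l_2-m)\le k\le\min(l_2,m,2m-l_2)$) that the paper explicitly leaves as an exercise, and your bookkeeping there is accurate.
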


\begin{proof}
Put $b=\frac{1}{3}(l_1+2l_2)$ and $\nu=\lambda+b{\bf1}=(l_1+l_2,l_2,0)$. Then $L_{\mb C}(\nu)=\det^b\ot L_{\mb C}(\lambda)$. So it suffices to show that there are $a+1$ standard tableaux of shape $\nu$ and weight $b\bf1$. This we leave as an exercise for the reader. One has to distinguish the cases $l_1\ge l_2$ and $l_2\ge l_1$.
\end{proof}

\begin{propgl}\label{prop.GL_3}\
\begin{enumerate}[{\rm(i)}]
\item Let $\lambda=l_1\varpi_1+l_2\varpi_2$ be dominant and in the root lattice and put $a=\min(l_1,l_2)$. Put $d=d_1^{(l_1-l_2)/3}$ if $l_1\ge l_2$ and put $d=d_2^{(l_2-l_1)/3}$ otherwise. Then the elements $d\,\xi_{31}^i|\mc X^{(1,3)}|^{a-i}$, $0\le i\le a$, form a basis of the $k[\g]^G$-module $k[\g]^U_\lambda$.
\item The $k$-algebra $k[\g]^U$ is generated by $s_1,s_2,s_3$, $\xi_{31}$, $|\mc X^{(1,3)}|$, $d_1$ and $d_2$. A defining relation is given by $$d_1d_2-|\mc X^{(1,3)}|^3-\xi_{31}|\mc X^{(1,3)}|^2s_1-\xi_{31}^2|\mc X^{(1,3)}|s_2-\xi_{31}^3s_3=0\,.$$
\end{enumerate}
\end{propgl}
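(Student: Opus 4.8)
The plan for (i) is to reduce everything to a statement on the nilpotent cone $\mc N$ and then invoke the Graded Nakayama Lemma. By Lemma~\ref{lem.multGL_3} together with Proposition~\ref{prop.free}(iii), the module $k[\g]^U_\lambda$ is free over $k[\g]^G$ of rank $a+1$ and $\dim k[\mc N]^U_\lambda=a+1$. A short weight computation (the weights of $\xi_{31}$ and $|\mc X^{(1,3)}|$ are both $\varpi_1+\varpi_2$, while $d_1$ and $d_2$ have weights $3\varpi_1$ and $3\varpi_2$) shows that the $a+1$ homogeneous elements $d\,\xi_{31}^i|\mc X^{(1,3)}|^{a-i}$ all lie in $k[\g]^U_\lambda$. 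By Lemma~\ref{lem.gradednakayama}(ii) and Proposition~\ref{prop.free}(i) it then suffices to show that the restrictions of these $a+1$ elements to $\mc N$ are linearly independent, since their number already equals $\dim k[\mc N]^U_\lambda$.

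For the linear independence I would use that $\mc N$ is irreducible, so $k[\mc N]$ is an integral domain. The factor $d$ is a power of $d_1$ or of $d_2$, and $d_1|_{\mc N}$, $d_2|_{\mc N}$ are nonzero by Theorem~\ref{thm.basicinvs}(i) in the case $n=3$, $t=2$; hence $d|_{\mc N}\neq0$. It remains to see that $\xi_{31}|_{\mc N}$ and $|\mc X^{(1,3)}||_{\mc N}$ are algebraically independent. Restricting to the strictly lower triangular matrices $A$, which all lie in $\mc N$, one has $\xi_{31}(A)=A_{31}$ and $|\mc X^{(1,3)}|(A)=A_{21}A_{32}$, and the resulting morphism to $k^2$ is visibly dominant. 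Thus the monomials $\xi_{31}^i|\mc X^{(1,3)}|^{a-i}|_{\mc N}$ are linearly independent, and multiplying by the nonzero element $d|_{\mc N}$ of the domain $k[\mc N]$ preserves this, proving (i).

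For (ii), generation is immediate from (i): every weight space $k[\g]^U_\lambda$ is spanned over $k[\g]^G=k[s_1,s_2,s_3]$ by monomials in $\xi_{31},|\mc X^{(1,3)}|,d_1,d_2$, so the seven listed elements generate the $k$-algebra $k[\g]^U$. The stated relation is homogeneous of degree $6$ and lies in the single weight space $k[\g]^U_{3(\varpi_1+\varpi_2)}$; I would verify it either by expanding $d_1d_2$ directly in the $\xi_{ij}$, or, more cheaply, by expressing $d_1d_2$ in the $k[\g]^G$-basis of that weight space furnished by (i) (here $l_1=l_2=3$ and $a=3$, so the basis is $\xi_{31}^i|\mc X^{(1,3)}|^{3-i}$, $0\le i\le3$) and reading off the coefficients $1,s_1,s_2,s_3$.

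To see that this is a defining relation, form the surjection $\phi$ onto $k[\g]^U$ from the polynomial ring $P=k[\sigma_1,\sigma_2,\sigma_3,x,y,\delta_1,\delta_2]$ sending the variables respectively to $s_1,s_2,s_3,\xi_{31},|\mc X^{(1,3)}|,d_1,d_2$, and let $F$ be the preimage of the relation, so $(F)\subseteq\Ker\phi$. Assigning $x,y$ the weight $\varpi_1+\varpi_2$, $\delta_1$ the weight $3\varpi_1$, $\delta_2$ the weight $3\varpi_2$ and the $\sigma_i$ the weight $\un 0$ makes $\phi$ and $F$ homogeneous for the grading by $X$. Writing $A=k[\sigma_1,\sigma_2,\sigma_3,x,y]$ and letting $g$ be the image of $y^3+xy^2\sigma_1+x^2y\sigma_2+x^3\sigma_3$, we have $P/(F)=A[\delta_1,\delta_2]/(\delta_1\delta_2-g)$; since $A$ is a domain and $g\neq0$, reduction via $\delta_1\delta_2=g$ gives the standard normal form, so $P/(F)$ is free over $A$ on $\{\delta_1^c\}_{c\ge0}\cup\{\delta_2^e\}_{e\ge1}$, hence free over $k[\sigma_1,\sigma_2,\sigma_3]$ on the monomials $x^ay^b\delta_1^c$ and $x^ay^b\delta_2^e$ with $e\ge1$. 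Comparing weights, in each weight $\lambda$ these basis monomials map under the induced graded map $\bar\phi$ bijectively onto the $k[\g]^G$-basis of $k[\g]^U_\lambda$ from (i), and both spaces vanish unless $\lambda$ is dominant in the root lattice. Hence $\bar\phi$ is an isomorphism on every graded piece, so $\Ker\phi=(F)$. The main obstacle is the explicit verification of the relation and, above all, the bookkeeping that identifies $P/(F)$, weight space by weight space, with the bases produced in (i); everything else is formal once (i) is in hand.
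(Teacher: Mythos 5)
Your proposal is correct, but it takes genuinely different routes from the paper at the two substantive points. For the linear independence in (i), the paper exploits that the $a+1$ elements $d\,\xi_{31}^i|\mc X^{(1,3)}|^{a-i}$ have pairwise distinct degrees (namely $\deg d+2a-i$), so linear independence of their restrictions to $\mc N$ reduces to each being nonzero there, which is checked by evaluating all of them at the single nilpotent matrix $\Big[\begin{smallmatrix}0&0&0\\1&0&0\\1&1&0\end{smallmatrix}\Big]$; your argument via irreducibility of $\mc N$, nonvanishing of $d_1,d_2$ on $\mc N$ from Theorem~\ref{thm.basicinvs}(i), and algebraic independence of $\xi_{31}|_{\mc N}$ and $|\mc X^{(1,3)}||_{\mc N}$ (via dominance on the strictly lower triangular matrices) is also valid, just longer. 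The bigger divergence is in (ii): to see the relation is defining, the paper notes that $\Ker\phi$ is a prime ideal containing $(F)$, that $F$ is irreducible by Gauss's Lemma, and that $\dim k[\g]^U=6$ by Lemma~\ref{lem.dimension}, so both primes cut out a quotient of Krull dimension $6$ and must coincide. You instead exhibit a normal form for $P/(F)$ — free over $k[\sigma_1,\sigma_2,\sigma_3]$ on the monomials $x^ay^b\delta_1^c$ and $x^ay^b\delta_2^e$ with $e\ge1$ — and match these monomials weight-by-weight against the bases produced in (i), concluding that the induced graded map $P/(F)\to k[\g]^U$ is an isomorphism. Your route is more bookkeeping-heavy but buys real things: it avoids both Lemma~\ref{lem.dimension} and the irreducibility of $F$, it displays the free $k[\g]^G$-module structure of every weight space directly in the presentation, and by setting $\sigma_1=\sigma_2=\sigma_3=0$ it immediately yields the Remark following the proposition (the presentation of $k[\mc N]^U$ with defining relation $d_1d_2-|\mc X^{(1,3)}|^3=0$). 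The paper's route is the slicker one: a single Krull dimension count plus Gauss's Lemma. Both your proposal and the paper leave the verification of the relation itself as a direct computation.
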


\begin{proof}
(i).\ By Proposition~\ref{prop.free} and Lemmas~\ref{lem.gradednakayama} and \ref{lem.multGL_3} it suffices to show that the given elements are independent on $\mc N$. Since they all have different degrees, it suffices to show they are nonzero on $\mc N$. One easily checks that they are all nonzero on $\Big[\begin{smallmatrix}0&0&0\\1&0&0\\1&1&0\end{smallmatrix}\Big]$.\\
(ii).\ By (i) the 7 given elements generate $k[\g]^U$ and by Lemma~\ref{lem.dimension} $\dim k[\g]^U=6$. A straightforward computation shows that the given equation holds and it is clearly irreducible, e.g. by Gauss's Lemma.
\end{proof}

\begin{remgl}
Note that Proposition~\ref{prop.GL_3} also shows that the $k$-algebra $k[\mc N]^U$ is generated by $\xi_{31}$, $|\mc X^{(1,3)}|$, $d_1$ and $d_2$ with defining relation $d_1d_2-|\mc X^{(1,3)}|^3=0$.
\end{remgl}

\section{The method in general}\label{s.question}
As the reader may have noticed after reading the proof of Theorem~\ref{thm.basicinvs}(ii) our method for producing highest weight vectors applies to any dominant weight in the root lattice. So one may wonder whether we always get $k[\g]^G$-module generators. We formulate this as a question. We assume that $k=\mb C$ and use the notation of Section~\ref{s.semiinvs} before Theorem~\ref{thm.basicinvs}.

\medskip

\noindent{\bf Question.} {\it Let $\lambda=[\lambda^+,\lambda^-]$ be dominant and in the root lattice with $\lambda^+$ and $\lambda^-$ partitions of $t$. Do the elements $\vartheta\big(\psi_t(E_\lambda)\cdot s_{i_1}\ot\cdots\ot s_{i_t}\big)$, $2\le i_1,\ldots,i_t\le n$, generate the $k[\g]^G$-module $k[\g]^U_\lambda$? Equivalently, do their restrictions to $\mc N$ span $k[\mc N]^U_\lambda$?}

\medskip

Note that the only thing that varies here is the tuple $(i_1,\ldots,i_t)$. Note also that we allow repetitions in the arguments $s_{i_j}$. As an example we consider the case $n=4$ and $\lambda=2\varpi_2=(1,1,-1,-1)$, a primitive weight. Then the Hesselink-Peterson formula \cite{Hes} shows that $k[\mc N]^U_\lambda$ has dimension $2$ with a generator in degree $2$ and one in degree $4$. We have
$$\vartheta\big(\psi_t(E_\lambda)\cdot s_{i_1}\ot s_{i_2}\big)=\pm\sum{\rm sgn}(\sigma)\,{\rm sgn}(\tau)\partial_{\sigma_1\tau_3}s_{i_1}\partial_{\sigma_2\tau_4}s_{i_2}\,,$$
where the sum is over all $\sigma\in\Sym(\{1,2\})$ and all $\tau\in\Sym(\{3,4\})$.
It follows that $\vartheta\big(\psi_t(E_\lambda)\cdot s_2\ot s_2\big)=\pm2\det(\mc X_{\{3,4\},\{1,2\}})$, where $\mc X_{\{3,4\},\{1,2\}}$ is defined as in the proof of Theorem~\ref{thm.basicinvs}. Clearly this is nonzero on the nilpotent cone. Note that the choice $(s_2,s_2)$ is the only choice that gives the degree $2$ generator. One can check that $(s_3,s_3)$ and $(s_2,s_4)$ both produce semi-invariants of degree $4$ that are nonzero on $\mc N$. In the case $(s_2,s_4)$ it is nonzero on $\mc N$ in any characteristic.

By Theorem~\ref{thm.basicinvs} the answer to our question is affirmative for the weights $\lambda_t$ and $\mu_t$. The basis elements of the spaces $k[\g]^U_{r\lambda^t}$ and $k[\g]^U_{r\mu^t}$, $r>1$ and $t\in\{1,n-2,n-1\}$, from Theorem~\ref{thm.algindinvs} are not formed in accordance with our question.

One can probably formulate a more complicated question for $k$ of arbitrary characteristic, where one divides the expression $\vartheta\big(\psi_t(E_\lambda)\cdot s_{i_1}\ot\cdots\ot s_{i_t}\big)$  by a suitable integer in case of repeated arguments.

\medskip

\noindent{\it Acknowledgement}. This research was funded by a research grant from The Leverhulme Trust.

\bigskip

{\sc\noindent School of Mathematics,
Trinity College Dublin, Dublin~2, Ireland.
{\it E-mail address : }{\tt tanger@tcd.ie}
}

\end{document}